\renewcommand*{\eqref}[1]{%
  \hyperref[{#1}]{\textup{\tagform@{\ref*{#1}}}}%
}
\numberwithin{figure}{section}
\numberwithin{equation}{section}
\newtheorem{theorem}{Theorem}[section]
\newtheorem*{theorem*}{Theorem}
\newtheorem{proposition}[theorem]{Proposition}
\newtheorem*{corollary*}{Corollary}
\newtheorem{lemma}[theorem]{Lemma}
\theoremstyle{definition}
\newtheorem{remark}[theorem]{Remark}
\newtheorem*{claim*}{Claim}
\newcommand{\R}{\mathbb R}
\newcommand{\N}{\mathbb N}
\newcommand{\PP}{\mathbb P}
\newcommand{\EE}{\mathbb E}
\newcommand{\Ent}{\operatorname{Ent}}
\newcommand{\ins}{\lambda}
\newcommand{\TT}{T}
\newcommand*\diff{\mathop{}\!\mathrm{d}}
\newcommand{\Pheat}{\mathrm P}
\newcommand{\Hheat}{\mathrm H}
\newcommand{\der}{\mathrm D}
\newcommand{\HH}{\mathrm H}
\title{Improvement of Wu's logarithmic Sobolev inequality via the Poisson-F\"ollmer process}
\author{Shrey Aryan}
\address{Department of Mathematics, Massachusetts Institute of Technology, Cambridge, MA, USA}
\email{shrey183@mit.edu}
\author{Pablo L\'{o}pez-Rivera}
\address{Laboratoire Jacques-Louis Lions (LJLL),  Universit\'{e} Paris Cit\'{e} \& Sorbonne Universit\'{e}, CNRS, Paris
F-75013, France}
\email{plopez@math.univ-paris-diderot.fr}
\author{Yair Shenfeld}
\address{Division of Applied Mathematics, Brown University, Providence, RI, USA}
\email{Yair\_Shenfeld@Brown.edu}
\begin{document}
\maketitle

\begin{abstract}
We give an alternative proof to Wu's logarithmic Sobolev inequality for the Poisson measure on the nonnegative integers using a stochastic variational formula for entropy. We show that this approach leads to improvement of Wu's inequality under convexity assumptions.
\end{abstract}

\section{Introduction}
\label{sec:intro}

The classical logarithmic Sobolev inequality for the Gaussian measure, due to Gross \cite{gross1975logarithmic} and Stam \cite{MR109101}, has numerous applications in probability, analysis, and geometry \cite{MR1767995}. On the other hand, a logarithmic Sobolev inequality cannot hold for the discrete analogue of the Gaussian, namely  the Poisson measure on the nonnegative integers \cite{MR1636948}. However, it was shown by Bobkov and Ledoux \cite{MR1636948} that \emph{modified} logarithmic Sobolev inequalities do hold for the Poisson measure, which in turn imply concentration of measure properties for Poisson random variables. The sharpest form of these inequalities is due to Wu \cite{MR1800540} (who in fact proved them for more general Poisson point processes).

\begin{theorem}[Wu's inequality \cite{MR1800540}]
\label{thm:Wu}
Fix $T>0$, and let $\pi_T$ be the Poisson measure on $\N$ with intensity $T$. Let $f:\N\to (0,\infty)$ be an $L^1(\pi_T)$-integrable function, and define
\begin{equation*}
\Ent_{\pi_T}[f]:=\sum_{k=0}^{\infty}f(k)\log f(k)\pi_T(k)-\left(\sum_{k=0}^{\infty}f(k)\pi_T(k)\right)\log\left(\sum_{k=0}^{\infty}f(k)\pi_T(k)\right).
\end{equation*}
Then,
\begin{equation}
\label{eq:Wu_inq_intro}
\Ent_{\pi_T}[f]\le T\sum_{k=0}^{\infty}f(k+1)\left\{\log\left(\frac{ f(k+1)}{f(k)}\right)-1+\frac{f(k)}{f(k+1)}\right\} \pi_T(k).
\end{equation}
\end{theorem}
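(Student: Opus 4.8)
The plan is to realize the entropy as the transport cost of a controlled (Föllmer) Poisson process and to read Wu's inequality off from the monotonicity of the instantaneous cost. Since both sides of \eqref{eq:Wu_inq_intro} are homogeneous of degree one in $f$ (using $f\in L^1(\pi_T)$ and $f>0$), I first normalize $\EE_{\pi_T}[f]=1$, so that $\mu:=f\pi_T$ is a probability measure and $\Ent_{\pi_T}[f]=D(\mu\,\|\,\pi_T)$, the relative entropy. Let $(N_t)_{0\le t\le T}$ be a rate-one Poisson process under $\mathbb{P}$, so $N_T\sim\pi_T$, with semigroup $P_s g(k)=\EE[g(k+N_s)]$ and generator $\mathcal{L}g(k)=g(k+1)-g(k)$. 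Then $h_t:=P_{T-t}f$ is space--time harmonic, $\partial_t h_t=-\mathcal{L}h_t$, so $h_t(N_t)$ is a positive $\mathbb{P}$-martingale with $h_0(0)=\EE_{\pi_T}[f]=1$. I would use it to define the Poisson--Föllmer measure $\mathbb{Q}$ by $\frac{\diff\mathbb{Q}}{\diff\mathbb{P}}\big|_{\mathcal{F}_t}=h_t(N_t)$. By the Doob transform, under $\mathbb{Q}$ the process $N$ is pure-jump with predictable intensity $\rho_t=r_t(N_{t^-})$, where $r_t(k):=h_t(k+1)/h_t(k)$, and its time-$T$ marginal is exactly $\mu$.

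The stochastic variational formula for the entropy (established earlier, and whose required direction also follows from the data-processing inequality applied to the time-$T$ marginal) then gives
\[
\Ent_{\pi_T}[f]=D(\mu\,\|\,\pi_T)=\EE_{\mathbb{Q}}\Big[\int_0^T c(\rho_t)\,\diff t\Big],\qquad c(\rho):=\rho\log\rho-\rho+1.
\]
The crucial observation is that the right-hand side of \eqref{eq:Wu_inq_intro} is precisely $T$ times the \emph{terminal} cost: since $r_T(k)=f(k+1)/f(k)$ and $N_T\sim\mu$ under $\mathbb{Q}$, the Bregman-divergence identity $f(k)\,c\!\big(f(k+1)/f(k)\big)=f(k+1)\{\log(f(k+1)/f(k))-1+f(k)/f(k+1)\}$ yields
\[
T\,\EE_{\mathbb{Q}}[c(\rho_T)]=T\sum_{k=0}^\infty f(k+1)\Big\{\log\tfrac{f(k+1)}{f(k)}-1+\tfrac{f(k)}{f(k+1)}\Big\}\pi_T(k).
\]
Writing $\Phi(t):=\EE_{\mathbb{Q}}[c(\rho_t)]$, the theorem thus reduces to the single inequality $\int_0^T\Phi(t)\,\diff t\le T\,\Phi(T)$, which holds as soon as $\Phi$ is nondecreasing on $[0,T]$.

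To establish the monotonicity I would prove the sharper statement that $M_t:=r_t(N_t)$ is a $\mathbb{Q}$-martingale; note $\Phi(t)=\EE_{\mathbb{Q}}[c(M_t)]$ since $N_t=N_{t^-}$ almost surely at a fixed time. Computing the $\mathbb{Q}$-drift of $M_t$, whose generator is $\mathcal{L}^{\mathbb{Q}}_t g(k)=r_t(k)[g(k+1)-g(k)]$, and using $\partial_t h_t=-\mathcal{L}h_t$, the time-derivative and jump terms cancel exactly,
\[
\partial_t r_t(k)+r_t(k)\big[r_t(k+1)-r_t(k)\big]=\frac{h_t(k+1)^2-h_t(k)h_t(k+2)}{h_t(k)^2}+\frac{h_t(k)h_t(k+2)-h_t(k+1)^2}{h_t(k)^2}=0,
\]
so $M_t$ is a local $\mathbb{Q}$-martingale. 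Since $c$ is convex ($c''(\rho)=1/\rho>0$), $c(M_t)$ is then a $\mathbb{Q}$-submartingale, whence $\Phi$ is nondecreasing and the proof is complete. I expect the main obstacle to be the analytic rigor behind this clean algebra: justifying the change of measure and the predictable-intensity representation for the point process (Girsanov/Doob transform), upgrading the local martingale to a genuine (sub)martingale by controlling the integrability of $r_t(N_t)$ and the finiteness of $\Phi$, and handling degeneracies where the ratios $f(k+1)/f(k)$ blow up — which is exactly where the positivity and $L^1(\pi_T)$ hypotheses on $f$, and the validity of the variational representation itself, must be used.
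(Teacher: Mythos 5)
Your proposal is correct and follows essentially the same route as the paper: the entropy representation formula $\HH(\mu|\pi_T)=\int_0^T\EE[\lambda_t\log\lambda_t-\lambda_t+1]\,\diff t$ for the Poisson--F\"ollmer process, the martingale property of the intensity ratio $\lambda_t=P_{T-t}f(X_t+1)/P_{T-t}f(X_t)$ (your drift cancellation is exactly the paper's Lemma on $\partial_t G=-G\,\der G$), and convexity of $x\mapsto x\log x-x+1$ via Jensen to bound the running cost by $T$ times the terminal cost. The only cosmetic difference is that you construct the process as a Doob $h$-transform of a rate-one Poisson process rather than by thinning a planar Poisson point process as in Klartag--Lehec, but the resulting process and argument are identical.
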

In the Gaussian setting, there is a beautiful proof of the classical  logarithmic Sobolev inequality due to Lehec \cite{lehec2013representation},  who showed how to deduce the inequality from a stochastic representation formula for the relative entropy with respect to the Gaussian. Our first result is to show that  using a stochastic representation formula for the relative entropy with respect to the Poisson measure (due to Klartag and Lehec \cite{Klartag_Lehec} who built on the earlier work of Budhiraja, Dupuis, and Maroulas \cite{MR2841073}), we can give a precise analogue of this proof in the discrete setting to prove Wu's inequality  \eqref{eq:Wu_inq_intro}. As a by-product of this proof technique we will obtain an improvement of Wu's inequality under convexity assumptions. Concretely, as we show in Section \ref{sec:eq-wu}, equality is attained in \eqref{eq:Wu_inq_intro} if, and only if, there exist $a,b\in \R$ such that
\begin{equation}
\label{eq:equality_Wu_intro}
f(k)=e^{ak+b}\quad\text{for all } k\in \N. 
\end{equation}
Denoting by $\delta(f)$ the deficit in  \eqref{eq:Wu_inq_intro},
\begin{equation}
\label{eq:deficit_def}
\delta(f):=T\sum_{k=0}^{\infty}f(k+1)\left\{\log\left(\frac{ f(k+1)}{f(k)}\right)-1+\frac{f(k)}{f(k+1)}\right\} \pi_T(k)-\Ent_{\pi_T}[f],
\end{equation}
an improvement of Wu's inequality should lower bound $\delta(f)$ by a positive quantity, for $f$ which is not of the form \eqref{eq:equality_Wu_intro}.  In this work we show that, under convexity type assumptions on $f$, we can use the entropy representation formula to obtain a strict lower bound on $\delta(f)$. In the Gaussian setting this program was carried out by Eldan, Lehec, and Shenfeld \cite{eldan2020stability}, who obtained a number of improvements and stability results. The discrete setting raises new challenges which, as we show in this work, can nonetheless be (partially) overcome. We refer the reader to Section \ref{sec:gauss} for a discussion comparing the Gaussian and Poisson settings.

Our improvement of Wu's inequality will hold under the assumption that $f$ is \textbf{ultra-log-concave}:
\begin{equation}
\label{eq:alpha_lcvx_intro_def}
kf(k)^2\ge (k+1)f(k+1)f(k-1), \quad\text{for all }k\in \N,\quad\text{with} \quad f(-1):=0.
\end{equation}
The equality cases \eqref{eq:equality_Wu_intro} of Wu's inequality are not ultra-log-concave, so if $f$ is ultra-log-concave we should be able to lower bound $\delta(f)$ by a strictly positive quantity. Our next result provides such lower bound in terms of $\EE[\mu]$, the mean of $\mu:=f\pi_T$, and the values of $f(0),f(1)$. These parameters naturally appear in functional inequalities for ultra-log-concave measure; cf. \cite[Remark 1.6]{lopez2024poisson}.

\begin{theorem}[Improvement of  Wu's inequality under ultra-log-concavity]
\label{thm:stab_ultra_log_concave_intro}
Fix $T>0$. Let $f:\N\to (0,\infty)$ be a function which is $L^1(\pi_T)$-integrable, ultra-log-concave, and satisfies $\int f\diff \pi_T=1$. Let $\mu:=f\pi_T$. Then,
\begin{equation}
\label{eq:stab_ultra}
\delta(f)\ge \frac{T^2}{2}\,\Theta_{\frac{f(0)}{f(1)}}\left(\frac{\EE[\mu]}{T}\right),
\end{equation}
where, for $c>0$,
\[
\Theta_c(z):=\frac{z^2}{1+cz}\log\left(\frac{1}{1+cz}\right)-\frac{z^2}{1+cz}+z^2,\quad z\ge 0.
\]
\end{theorem}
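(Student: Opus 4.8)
The plan is to run the Poisson--F\"ollmer proof of Theorem~\ref{thm:Wu} and then read off the deficit as an accumulated Jensen gap. Set $\phi(x):=x\log x-x+1$, so that the summand on the right-hand side of \eqref{eq:Wu_inq_intro} is $f(k+1)\{\log r_k-1+1/r_k\}=f(k)\,\phi(r_k)$ with $r_k:=f(k+1)/f(k)$. Let $(X_t)_{t\in[0,T]}$ be the F\"ollmer process driving $\pi_T$ to $\mu=f\pi_T$, with predictable intensity $\lambda_t$. The representation underlying the proof of Wu's inequality gives
\[
\Ent_{\pi_T}[f]=\EE\Big[\int_0^T\phi(\lambda_t)\,\diff t\Big],\qquad \lambda_t=\EE[\lambda_T\mid\mathcal F_t],\quad \lambda_T=r_{X_T},\ X_T\sim\mu,
\]
while the right-hand side of \eqref{eq:Wu_inq_intro} equals $T\,\EE[\phi(\lambda_T)]$. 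Since $\phi$ is convex, $(\phi(\lambda_t))_t$ is a submartingale, which both reproves the inequality and identifies
\[
\delta(f)=\int_0^T\EE\big[\phi(\lambda_T)-\phi(\lambda_t)\big]\,\diff t=\int_0^T\EE\big[D_\phi(\lambda_T,\lambda_t)\big]\,\diff t,
\]
where $D_\phi(a,b)=a\log(a/b)-a+b\ge 0$ is the Bregman divergence of $\phi$ (the drift term drops out because $\lambda$ is a martingale).

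Next I would extract the two inputs that ultra-log-concavity provides. First, using $\pi_T(k)=\tfrac{k+1}{T}\pi_T(k+1)$ one gets the identity
\[
\EE[\lambda_T]=\sum_{k}r_k\,\mu(k)=\sum_k f(k+1)\pi_T(k)=\frac1T\sum_k(k+1)f(k+1)\pi_T(k+1)=\frac{\EE[\mu]}{T}=:z,
\]
so in particular $\lambda_0=z$. Second, rewriting \eqref{eq:alpha_lcvx_intro_def} as $(k+1)r_k\le k\,r_{k-1}$ and telescoping yields $r_k\le \tfrac{f(1)}{(k+1)f(0)}$; hence $\lambda_T=r_{X_T}\le f(1)/f(0)=1/c$ with $c=f(0)/f(1)$, and since $\lambda_t$ is a conditional average of $\lambda_T$ we also have $\lambda_t\le 1/c$ for every $t$. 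Thus the entire trajectory of the intensity is confined to $[0,1/c]$, where $\phi''=1/x\ge c$.

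It remains to convert $\int_0^T\EE[D_\phi(\lambda_T,\lambda_t)]\,\diff t$ into $\tfrac{T^2}{2}\Theta_c(z)$. The guiding observation is the algebraic identity $\Theta_c(z)=z^2\,\phi\!\big(\tfrac{1}{1+cz}\big)$: the target is simply the Wu cost $\phi$ evaluated at the M\"obius factor $\tfrac{1}{1+cz}=\tfrac{r_0}{r_0+z}$ (with $r_0=1/c$), which solves the Riccati equation $w'(z)=-c\,w(z)^2$, $w(0)=1$. This is exactly the relation governing the boundary case of \eqref{eq:alpha_lcvx_intro_def}, namely the extremal profile $f(k)\propto(f(1)/f(0))^k/k!$, for which \eqref{eq:alpha_lcvx_intro_def} holds with equality (and which is \emph{not} a Wu-equality case, so carries positive deficit). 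The plan is therefore to lower bound the accumulated Bregman gap by comparison with this extremal profile, using the ceiling $\lambda_t\le 1/c$ and the mean constraint $\EE[\lambda_T]=z$ to pin the jump rate and jump size of the intensity martingale, while the Fubini identity $\int_0^T\EE[D_\phi(\lambda_T,\lambda_t)]\,\diff t=\int_0^T s\,\tfrac{\diff}{\diff s}\EE[\phi(\lambda_s)]\,\diff s$ supplies the prefactor $\tfrac{T^2}{2}$ once the time-weighted production is controlled.

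The main obstacle is precisely this last comparison: showing that, among ultra-log-concave densities with prescribed $z=\EE[\mu]/T$ and $c=f(0)/f(1)$, the accumulated gap is minimized by the extremal profile and equals $\tfrac{T^2}{2}\Theta_c(z)$. Unlike the Gaussian case of Eldan--Lehec--Shenfeld \cite{eldan2020stability}, where a clean second-order (variance) estimate suffices, the discreteness forces one to retain the full Bregman divergence $D_\phi$ and to control jump rates and jump sizes of the intensity martingale \emph{simultaneously}; quantifying this through \eqref{eq:alpha_lcvx_intro_def}, and handling the post-jump contributions without degrading the sharp constant and the M\"obius factor $\tfrac{1}{1+cz}$, is the crux of the argument.
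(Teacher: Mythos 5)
Your setup is sound as far as it goes: the identity $\delta(f)=\int_0^T\EE[\phi(\lambda_T)-\phi(\lambda_t)]\,\diff t$ with the drift removed by the martingale property, the mean identity $\EE[\lambda_t]=\EE[\mu]/T$, the telescoped consequence $\lambda_t\le f(1)/f(0)$ of \eqref{eq:alpha_lcvx_intro_def}, and the algebraic observation $\Theta_c(z)=z^2\phi\bigl(\tfrac{1}{1+cz}\bigr)$ are all correct. But the argument stops exactly where the theorem begins: you reduce everything to the claim that, among ultra-log-concave densities with prescribed $z$ and $c$, the accumulated Bregman gap is minimized by an ``extremal profile'' and equals $\tfrac{T^2}{2}\Theta_c(z)$, and you yourself flag this comparison as an unresolved obstacle. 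That variational step is never carried out, and it is not how the paper proceeds; the paper's bound comes from a chain of pointwise inequalities (the $c$-log-concavity rearrangement, the monotonicity of $c_s$, Jensen for $\Theta_c$) that need not be simultaneously tight, so there is no reason to expect the bound to be attained by any single profile, and organizing the proof around such an extremality claim is likely a dead end. The M\"obius factor $\tfrac{1}{1+cz}$ does not arise from a Riccati equation but from rearranging the $c$-log-concavity inequality.

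What is actually needed is a pointwise lower bound on the \emph{instantaneous entropy production}. The It\^o-type computation of Lemma \ref{lem:phi_der} gives $\partial_s\EE[\phi(\lambda_s)]=\EE[\lambda_s\{\phi(\lambda_s+\der G(s,X_s))-\phi(\lambda_s)-\phi'(\lambda_s)\der G(s,X_s)\}]$, i.e.\ jump rate times the one-jump Bregman gap of $\phi$, which by Lemmas \ref{lem:phi_Taylor} and \ref{lem:entropy_between_Poisson} equals $\EE[\lambda_s^2\,\HH(\pi_{e^{\der^2F(s,X_s)}}|\pi_1)]$ (Proposition \ref{prop:deficit_identity}). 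The two inputs missing from your proposal are: (i) ultra-log-concavity is preserved by the Poisson semigroup (Lemma \ref{lem:heat_flow_ulc}, via Liggett's convolution theorem), so Lemma \ref{lem:ulc_prop} applies to $\Pheat_{T-s}f$ at every time $s$ and yields the almost-sure bound $e^{\der^2F(s,X_s)}\le(1+c\lambda_s)^{-1}$ with $c=f(0)/f(1)$, after checking that $s\mapsto\Pheat_{T-s}f(0)/\Pheat_{T-s}f(1)$ is non-increasing; and (ii) the monotonicity of $\alpha\mapsto\HH(\pi_\alpha|\pi_1)$ on $(0,1]$, which converts that bound into $\lambda_s^2\,\HH(\pi_{e^{\der^2F(s,X_s)}}|\pi_1)\ge\Theta_c(\lambda_s)$. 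The conclusion then follows from the convexity of $\Theta_c$, Jensen, and \eqref{eq:Xt_mean} --- not from a comparison with an extremal density. Your alternative terminal estimate $D_\phi(\lambda_T,\lambda_t)\ge\tfrac{c}{2}(\lambda_T-\lambda_t)^2$ (from $\phi''\ge c$ on $(0,1/c]$) would additionally require a lower bound on the quadratic variation of the intensity martingale, which you also do not supply.
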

The function $\Theta_c$ is nonnegative for $z\ge 0$, and in fact strictly positive for $z>0$, so the right-hand side of \eqref{eq:stab_ultra} is always strictly positive. 
\begin{remark}
We can relate $\Theta_c$ to the relative entropy between Poisson measures of different intensities via
\begin{equation}
\label{eq:Theta_rel_ent}
\Theta_c(z)=z^2\mathrm H\left(\pi_{(1+cz)^{-1}}|\pi_1\right).
\end{equation}
On the other hand, as we show in Proposition \ref{prop:deficit_identity}, the deficit $\delta(f)$ is \emph{equal} to a (random) weighted sum of relative entropies between Poisson measures of different intensities, which makes $\Theta_c$ a natural quantity in this context.
\end{remark}

\begin{remark} The question of improvement of Wu's inequality under log-concavity assumptions is delicate. For example, when $f$ is ultra-log-concave, we have that $\mu=f\pi_T$ is \emph{$\beta$-log-concave}:
\[
\frac{\mu(k+1)^2-\mu(k+2)\mu(k)}{\mu(k+1)\mu(k+2)}\ge \beta,\quad\text{for all }k\in \N,
\]
with $T\beta=\frac{f(0)}{f(1)}$ \cite[Lemma 5.1]{MR3729642}. On the other hand, in this setting, $\EE[\mu]\le \frac{1}{\beta}$ \cite[Lemma 5.3]{MR3729642}. One might wonder whether we have the following improvement of Wu's inequality (which will be stronger than \eqref{eq:stab_ultra} as $z\mapsto \Theta_c(z)$ is increasing): For all $\beta$-log-concave measures,
\begin{equation}
\label{eq:stab_conj}
\delta(f)\overset{?}{\ge}  \frac{T^2}{2}\, \Theta_{T\beta}\left(\frac{1}{T\beta}\right)=\frac{1-\log (2)}{4}\frac{1}{\beta^2}.
\end{equation}
The answer is negative: let $\mu=f\pi_T$  with $f(k)=e^{ak+b}$ as in  \eqref{eq:equality_Wu_intro}, so that $\mu$ is $\frac{1}{Te^a}$-log-concave. Then the left-hand side of \eqref{eq:stab_conj} vanishes  (since $f$ is an equality case of Wu's inequality), but the right-hand side is strictly positive, which is impossible. 
\end{remark}

\subsection*{Organization of paper}
In Section \ref{sec:proof_inq} we introduce the Poisson-F\"ollmer process.  In Section \ref{sec:stab_inq} we introduce the entropy representation formula, provide a new proof of Wu's inequality (Theorem \ref{thm:Wu}), and prove our main result Theorem \ref{thm:stab_ultra_log_concave_intro}. In Section \ref{sec:eq-wu} we characterize the equality cases of Wu's inequality \eqref{eq:Wu_inq_intro}; this section is not needed for the results in the previous sections and is included for completeness. In Section \ref{sec:gauss} we discuss the comparison between the Gaussian and  Poisson settings.

\subsection{Acknowledgments} 
We thank Max Fathi and Joseph Lehec for useful conversations. We are very grateful to the referees who provided very valuable comments that improved the manuscript. 

This project has received funding from the European Union's Horizon 2020 research and innovation programme under the Marie Sk\l{}odowska-Curie grant agreement No 945332. This work has also received support under the program ``Investissement d'Avenir" launched by the French Government and implemented by ANR, with the reference ``ANR-18-IdEx-0001" as part of its program ``Emergence".  This work received funding from the Agence Nationale de la Recherche (ANR) Grant ANR-23-CE40-0003 (Project CONVIVIALITY), as well as funding from the Institut Universitaire de France.

This material is based upon work supported by the National Science Foundation under Awards
DMS-2331920 and DMS-2508545.

\section{The Poisson-F\"ollmer process}
\label{sec:proof_inq}
In this section we define the Poisson-F\"ollmer process and prove some of its fundamental properties. We begin with general preliminaries on the Poisson semigroup.
\subsection{Preliminaries}  
We denote the natural numbers as $\N:=\{0,1,\ldots\}$, the integers as $\mathbb Z$,  and the real numbers as $\R$. For $t\ge 0$ we denote by $\pi_t$ the Poisson measure on $\N$ with intensity $t$,
\begin{equation}
\label{eq:poisson}
\pi_t(k):=e^{-t}\frac{t^k}{k!},\quad \forall~ k\in \N,
\end{equation}
where $\pi_0$ is  a point-mass at 0. We say that a function $f:\N\to \R$ is \emph{$L^1(\pi_t)$-integrable} if 
\[
\int |f|\diff \pi_t:=\sum_{n\in\N}|f(n)|\pi_t(n)<\infty.
\]
Fix $t>0$ and a function $f:\N\to\R$ which is $L^1(\pi_t)$-integrable. We define
\begin{equation}
\label{eq:Poisson_semigroup}
\Pheat_tf(k):=\sum_{n\in\N}f(k+n)\pi_t(n),\quad \forall ~k\in \N.
\end{equation}
For $t=0$ we set $\Pheat_0f=f$. The \emph{Poisson semigroup} is the collection of operators  $(\Pheat_t)_{t\ge 0}$, and it satisfies a heat equation of the form
\begin{equation}
\label{eq:heat}
\partial_t\Pheat_t f(k)=\der \Pheat_tf(k)=\Pheat_t\der f(k),\quad \forall ~k\in \N,
\end{equation}
where
\begin{equation}
\label{eq:derivative_def}
\der f(k):=f(k+1)-f(k), \quad \forall ~k\in \N.
\end{equation}
Fix $T>0$. Given $f:\N\to (0,\infty)$ which is $L^1(\pi_{T-t})$-integrable we will denote
\begin{equation}
\label{eq:Feq}
F(t,k):=\log \Pheat_{T-t}f(k),\quad\forall~k\in \N,
\end{equation}
and 
\begin{equation}
\label{eq:Geq}
G(t,k):=e^{\der F(t,k)},\quad\forall~k\in \N.
\end{equation}

\begin{lemma}
We have
\label{lem:time-der-G}
\begin{equation}
\label{eq:heat_exp}
\partial_tF(t,k)=-e^{\der F(t,k)}+1,\quad\forall~k\in \N,
\end{equation}
and 
\begin{equation}
\label{eq:heat_exp_G}
\partial_tG(t,k)=-G(t,k)\der G(t,k),\quad\forall~k\in \N.
\end{equation}
\end{lemma}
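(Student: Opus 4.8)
The plan is to prove both identities by direct computation, using the chain rule together with the heat equation \eqref{eq:heat} and being careful about the time reversal in the definition \eqref{eq:Feq} of $F$.

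First I would establish \eqref{eq:heat_exp}. Differentiating $F(t,k)=\log \Pheat_{T-t}f(k)$ in $t$ gives
\[
\partial_t F(t,k)=\frac{\partial_t \Pheat_{T-t}f(k)}{\Pheat_{T-t}f(k)}.
\]
The chain rule applied to $s\mapsto \Pheat_s f(k)$ at $s=T-t$ introduces a minus sign, and then the heat equation \eqref{eq:heat} yields $\partial_t \Pheat_{T-t}f(k)=-\der(\Pheat_{T-t}f)(k)=-\bigl(\Pheat_{T-t}f(k+1)-\Pheat_{T-t}f(k)\bigr)$. Dividing by $\Pheat_{T-t}f(k)$ produces $-\frac{\Pheat_{T-t}f(k+1)}{\Pheat_{T-t}f(k)}+1$. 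The key observation is that, since $\der F(t,k)=\log \Pheat_{T-t}f(k+1)-\log\Pheat_{T-t}f(k)$, one has $e^{\der F(t,k)}=\frac{\Pheat_{T-t}f(k+1)}{\Pheat_{T-t}f(k)}$, which gives exactly \eqref{eq:heat_exp}. Equivalently, this says $\partial_t F(t,k)=-G(t,k)+1$ in the notation \eqref{eq:Geq}.

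For the second identity \eqref{eq:heat_exp_G}, I would differentiate $G(t,k)=e^{\der F(t,k)}$ in $t$. Since the difference operator $\der$ acts in the spatial variable $k$ while $\partial_t$ acts in time, the two commute, so $\partial_t G(t,k)=G(t,k)\,\der\bigl(\partial_t F(t,k)\bigr)$. Substituting the reformulated first identity $\partial_t F(t,k)=-G(t,k)+1$ and noting that $\der$ annihilates the constant $1$, we obtain $\der(\partial_t F(t,k))=-\der G(t,k)$, and hence $\partial_t G(t,k)=-G(t,k)\,\der G(t,k)$, as claimed.

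Neither step presents a genuine difficulty; the only point requiring care is the sign arising from the time reversal $T-t$ in \eqref{eq:Feq}, and the interchange of $\partial_t$ with the finite difference $\der$, which is justified because these operators act on independent variables.
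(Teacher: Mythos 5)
Your proof is correct and follows the same route as the paper: the first identity comes from the heat equation \eqref{eq:heat} together with the sign from the time reversal $T-t$, and the second from differentiating $G=e^{\der F}$, commuting $\partial_t$ with $\der$, and substituting \eqref{eq:heat_exp}. Your write-up is simply a more detailed version of the paper's (very terse) argument for \eqref{eq:heat_exp}.
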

\begin{proof}
Equation \eqref{eq:heat_exp} follows from \eqref{eq:heat}. 
For equation \eqref{eq:heat_exp_G} we use \eqref{eq:Geq} and \eqref{eq:heat_exp},
\begin{align*}
\partial_tG(t,k)&=\partial_te^{\der F(t,k)}=e^{\der F(t,k)}\partial_t(\der F(t,k))=G(t,k)\der  (\partial_tF(t,k))\\
&=G(t,k)\der\left(-e^{\der F(t,k)}+1\right)=-G(t,k)\der G(t,k).
\end{align*}
\end{proof}

\subsection{The Poisson-F\"ollmer process}
\label{subsec:PF_process}
We now turn to the construction and properties of the Poisson-F\"ollmer process. Fix $T>0$,  and let  $\mu:=f\pi_T$ be a positive probability measure on $\N$. Klartag and Lehec \cite{Klartag_Lehec}, building on and specializing the work of Budhiraja, Dupuis, and Maroulas \cite{MR2841073}, constructed a stochastic counting process $(X_t)_{t\in [0,T]}$ such that $X_T\sim \mu$,  whenever $f$ is bounded or log-concave\footnote{The work \cite{Klartag_Lehec} considered the case when $f$ is bounded, and in \cite{lopez2024poisson} it was shown that boundedness can be replaced by log-concavity.}. We will describe the process briefly and refer to \cite{Klartag_Lehec, lopez2024poisson} for a complete description. We let $(\Omega,\mathcal F,\PP)$ be the underlying probability space on which the following random variables are defined. We let $N$ be a Poisson point process on $[0,T]\times (0,\infty)$ with Lebesgue intensity measure. Then $N(F)$, for a Borel set $F\subset [0,T]\times (0,\infty)$, is a Poisson random variable with intensity equal to the Lebesgue measure of $F$. For $t\in [0,T]$ we let $\mathcal F_t$ be the sigma-algebra generated by the following collection of random variables,
\[
\mathcal F_t:=\sigma\left(\{N(F): F\subset [0,t]\times (0,\infty)\text{ is a Borel set}\}\right).
\]
The collection $(\mathcal F_t)_{t\in [0,T]}$ is a filtration, and we say that a stochastic process $(\lambda_t)_{t\in [0,T]}$, where $\lambda_t:\Omega\to\R$, is \emph{predictable}, if the function $(t,\omega)\mapsto \lambda_t(\omega)$ is measurable with respect to the sigma-algebra $\sigma(\{(s,t]\times B: s\le t\le T, B\in\mathcal F_s\})$. Let $(\lambda_t)_{t\in [0,T]}$ be a predictable, nonnegative, and bounded stochastic process. Define
\begin{equation}
\label{eq:Xt_def}
X_t^{\lambda}(\omega):=N\left(\left\{(s,x)\in [0,T]\times (0,\infty): s< t,~ x\le \lambda_s(\omega)\right\}\right),\quad\text{(see Figure \ref{fig:X})}.
\end{equation}

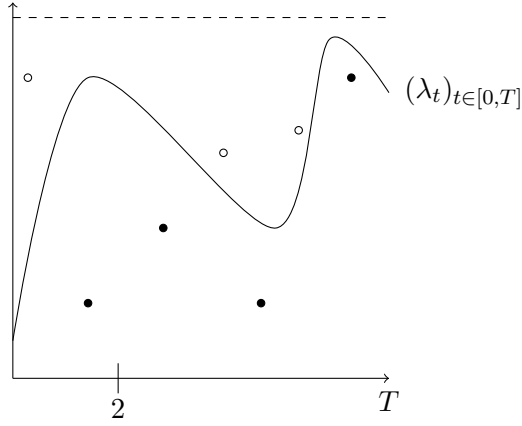
\begin{figure}
\centering
\begin{tikzpicture}[scale=1]
\draw[->] (0,0) -- (5,0);
  \draw[->] (0,0) -- (0,5);
  \draw[dashed] (0,4.8) -- (5,4.8);
  \draw (1.4,0.2) -- (1.4,-0.2);
\draw [black] plot [smooth, tension=1] coordinates { (0,.5) (1,4) (3.5,2) (4.2,4.5) (5,3.8)};
\node at (5,-0.3) {$\TT$};
\node at (1.4,-0.4) {$2$};
\node at (6,3.8) {$(\ins_t)_{t\in [0,\TT]}$};
\draw[black,] (0.2,4) circle (.3ex);
\draw[black,fill=black] (1,1) circle (.3ex);
\draw[black,fill=black] (2,2) circle (.3ex);
\draw[black,] (2.8,3) circle (.3ex);
\draw[black,fill=black] (3.3,1) circle (.3ex);
\draw[black,] (3.8,3.3) circle (.3ex);
\draw[black,fill=black] (4.5,4) circle (.3ex);
\end{tikzpicture}
\caption{The points in $[0,\TT]\times \R_{\ge 0}$ are generated according to a standard Poisson process (7 points in this case). At time $t\in [0,\TT]$ the value of the process $X_t^{\lambda}$ is equal to the number of points under the curve (filled circles). In the figure $X_2^{\lambda}=1$ and $X_{\TT}^{\lambda}=4$.}
\label{fig:X}
\end{figure}

Klartag and Lehec  \cite{Klartag_Lehec} showed that we can choose a particular density $(\lambda_t)_{t\in [0,T]}$ given by
\begin{equation}
\label{eq:lambda_def}
\lambda_t:=\frac{\Pheat_{T-t} f(X_t+1)}{\Pheat_{T-t} f(X_t)},
\end{equation}
where $(\Pheat_t)$ is the Poisson semigroup, and that the resulting process $(X_t)_{t\in [0,T]}:=(X_t^{\lambda})_{t\in [0,T]}$ is well-defined. Moreover, $X_T\sim \mu$. We call this process the \emph{Poisson-F\"ollmer process}, since it is the discrete analogue of the F\"ollmer process in continuous setting; see Section \ref{sec:gauss}. Let us establish some useful properties of $(X_t)_{t\in [0,T]}$ and $(\lambda_t)_{t\in [0,T]}$. 

\begin{lemma}{\cite[Lemma 3.2]{lopez2024poisson}}
\label{lem:law_Xt}
Let $(X_t)_{t\in [0,T]}$ be the Poisson-F\"ollmer process. Then, $X_t\sim (\Pheat_{T-t}f)\pi_t$. 
\end{lemma}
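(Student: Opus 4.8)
The plan is to identify $(X_t)_{t\in[0,T]}$ as a time-inhomogeneous pure-birth counting process and then to show that the claimed law is the unique solution of the associated Kolmogorov forward equation. By \eqref{eq:Xt_def} and \eqref{eq:lambda_def}, $X_t$ increases by jumps of size $+1$ occurring, when $X_t=k$, at the predictable rate $\lambda_t=\Pheat_{T-t}f(k+1)/\Pheat_{T-t}f(k)$; since $f>0$ these rates are finite and positive, and by the Klartag--Lehec construction \cite{Klartag_Lehec} they are bounded, so the process does not explode. Writing $q_t(k):=\PP(X_t=k)$, the upward-by-one dynamics yield the forward equation
\begin{equation*}
\partial_t q_t(k)=\frac{\Pheat_{T-t}f(k)}{\Pheat_{T-t}f(k-1)}\,q_t(k-1)-\frac{\Pheat_{T-t}f(k+1)}{\Pheat_{T-t}f(k)}\,q_t(k),\qquad q_t(-1):=0,
\end{equation*}
with initial condition $q_0=\delta_0$, since $X_0=0$ almost surely by \eqref{eq:Xt_def}.

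I would then verify that the candidate $p_t(k):=\Pheat_{T-t}f(k)\,\pi_t(k)$ solves this same system. Differentiating the product and using the heat equation \eqref{eq:heat}, which gives $\partial_t\Pheat_{T-t}f(k)=-\der(\Pheat_{T-t}f)(k)=\Pheat_{T-t}f(k)-\Pheat_{T-t}f(k+1)$, together with the elementary identity $\partial_t\pi_t(k)=\pi_t(k-1)-\pi_t(k)$ (with $\pi_t(-1):=0$) coming directly from \eqref{eq:poisson}, one finds after cancellation
\begin{equation*}
\partial_t p_t(k)=\Pheat_{T-t}f(k)\,\pi_t(k-1)-\Pheat_{T-t}f(k+1)\,\pi_t(k).
\end{equation*}
On the other hand, substituting $p_t$ into the right-hand side of the forward equation, the factors $\Pheat_{T-t}f(k-1)$ and $\Pheat_{T-t}f(k)$ cancel the denominators of the rates, producing exactly the same expression. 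Thus $p_t$ and $q_t$ solve the same linear ODE system, and since $\pi_0=\delta_0$ while $\Pheat_Tf(0)=\int f\diff\pi_T=1$, we have $p_0=\delta_0=q_0$.

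Uniqueness of solutions to this forward system — which follows from the boundedness of the rates via a Gr\"onwall argument — then forces $q_t=p_t$ for all $t\in[0,T]$, i.e. $X_t\sim(\Pheat_{T-t}f)\pi_t$, as claimed. The step requiring the most care is the rigorous passage from the pathwise definition \eqref{eq:Xt_def} to the forward equation for the marginals, and the well-posedness of that equation: both hinge on the positivity of $f$ (so that the rates and the candidate density are well-defined and finite) and on the boundedness of $(\lambda_t)$ ensured by the Klartag--Lehec construction, which rules out explosion and guarantees uniqueness. Everything else reduces to the product-rule computation above.
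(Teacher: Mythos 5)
The paper does not prove this lemma; it is quoted from \cite[Lemma 3.2]{lopez2024poisson}, and your argument is essentially the standard proof given there: identify the time-dependent forward (Fokker--Planck) equation for the marginals and check that $(\Pheat_{T-t}f)\pi_t$ solves it with the right initial condition. Your product-rule computation is correct (the key cancellation $\partial_t p_t(k)=\Pheat_{T-t}f(k)\pi_t(k-1)-\Pheat_{T-t}f(k+1)\pi_t(k)$ checks out), and the initial condition $p_0=\delta_0$ indeed uses $\Pheat_Tf(0)=\int f\diff\pi_T=1$. Two small remarks on the steps you flag as delicate. First, the passage from the pathwise definition \eqref{eq:Xt_def} to the forward equation is cleanest via the compensated-martingale property of Lemma \ref{lem:lambda_X_properties}(1) (from \cite{Klartag_Lehec}): for bounded $g$, $g(X_t)-\int_0^t\lambda_s\,\der g(X_s)\diff s$ is a martingale, and taking $g=\1_{\{k\}}$ and expectations yields exactly your equation, so no separate non-explosion argument is needed beyond what the construction already provides. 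Second, uniqueness does not require a Gr\"onwall argument on the infinite system: the system is triangular ($\partial_tq_t(k)$ involves only $q_t(k-1)$ and $q_t(k)$), so one solves the scalar linear ODEs recursively in $k$, each of which has a unique solution since $t\mapsto\Pheat_{T-t}f(k+1)/\Pheat_{T-t}f(k)$ is continuous on $[0,T]$ for each fixed $k$ (using $f>0$).
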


\begin{lemma}
\label{lem:lambda_X_properties}
$~$

\begin{enumerate}[(1)]
\item Denote the \emph{compensated} process $(\tilde X_t)_{t\in [0,T]}$ as
\begin{equation}
\label{eq:compens_def}
\tilde X_t:=X_t-\int_0^t\lambda_s\diff s.
\end{equation}
Then $(\tilde X_t)_{t\in [0,T]}$ is a martingale.

\item The process $(\lambda_t)_{t\in [0,T]}$ is a martingale.

\item For every $t\in [0,T]$,
\begin{equation}
\label{eq:Xt_mean}
\EE[X_t]=t\,\EE[\lambda_t]=\frac{t}{T}\EE[\mu].
\end{equation}
\end{enumerate}
\end{lemma}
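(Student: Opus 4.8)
The plan is to establish the three claims in sequence, exploiting the martingale structure of stochastic integrals against the compensated Poisson random measure together with a change-of-variables formula for the pure-jump process $(X_t)$. For part (1) I would invoke the standard martingale theory for integrals against a Poisson random measure. Because $N$ carries Lebesgue intensity on $[0,T]\times\R_{\ge 0}$, the compensated measure $\tilde N(\diff s,\diff x):=N(\diff s,\diff x)-\diff s\,\diff x$ integrates predictable integrands into martingales. From the construction \eqref{eq:Xt_def} one has
\[
X_t=\int_0^t\!\!\int_0^\infty \1_{\{x\le \lambda_s\}}\,N(\diff s,\diff x),
\]
and the integrand $(s,x,\omega)\mapsto \1_{\{x\le \lambda_s(\omega)\}}$ is predictable since $(\lambda_s)$ is. Hence $\tilde X_t=\int_0^t\!\int_0^\infty \1_{\{x\le \lambda_s\}}\,\tilde N(\diff s,\diff x)=X_t-\int_0^t\lambda_s\,\diff s$ is a martingale; boundedness of $(\lambda_s)$ upgrades this from a local to a genuine martingale.

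For part (2), I would write $\lambda_t=G(t,X_t)$ by \eqref{eq:lambda_def} and \eqref{eq:Geq}, and apply the change-of-variables formula for the jump process $(X_t)$, which increases by one exactly when a point falls under the curve. Pathwise,
\[
G(t,X_t)=G(0,X_0)+\int_0^t\partial_s G(s,X_s)\,\diff s+\sum_{0<s\le t}\big(G(s,X_s)-G(s,X_{s-})\big),
\]
and since each jump has size $+1$ we have $G(s,X_s)-G(s,X_{s-})=\der G(s,X_{s-})$, so the jump sum equals $\int_0^t \der G(s,X_{s-})\,\diff X_s$. Substituting $\diff X_s=\diff\tilde X_s+\lambda_s\,\diff s$ from part (1) and using $\lambda_s=G(s,X_s)$ for a.e.\ $s$, the finite-variation part of $G(t,X_t)$ acquires the density
\[
\partial_s G(s,X_s)+\lambda_s\,\der G(s,X_s)=\partial_s G(s,X_s)+G(s,X_s)\,\der G(s,X_s),
\]
which vanishes identically by \eqref{eq:heat_exp_G}. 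Thus $G(t,X_t)=G(0,X_0)+\int_0^t \der G(s,X_{s-})\,\diff\tilde X_s$ is the stochastic integral of a predictable process against the martingale $\tilde X$, whence $(\lambda_t)$ is itself a martingale.

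For part (3), I would combine the two. Since $X_0=0$, part (1) yields $\EE[X_t]=\int_0^t\EE[\lambda_s]\,\diff s$, while part (2) forces $s\mapsto\EE[\lambda_s]$ to be constant; together these give $\EE[X_t]=t\,\EE[\lambda_t]$. To identify the constant I would evaluate at the deterministic starting point $X_0=0$: then $\lambda_0=\Pheat_T f(1)/\Pheat_T f(0)$, and since $\int f\,\diff\pi_T=1$ we have $\Pheat_T f(0)=1$, while the reindexing $\Pheat_T f(1)=\sum_k f(k+1)\pi_T(k)=\tfrac1T\EE[\mu]$ gives $\EE[\lambda_t]=\EE[\lambda_0]=\tfrac1T\EE[\mu]$, which is \eqref{eq:Xt_mean}. (Equivalently, one could evaluate $\EE[X_t]=t\,\EE[\lambda_t]$ at $t=T$ and invoke $X_T\sim\mu$.)

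The crux, and the only genuinely delicate step, is part (2): one must apply the jump change-of-variables formula with the correct left limits $X_{s-}$ and recognize that the drift it produces is precisely the left-hand side of the heat-type identity \eqref{eq:heat_exp_G}, so that the PDE of Lemma \ref{lem:time-der-G} is exactly what makes $(\lambda_t)$ a martingale. The remaining care is integrability: passing from local to true martingales in (1) and (2) relies on the boundedness of $f$ and $(\lambda_t)$ to control the stochastic integrals.
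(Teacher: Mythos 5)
Your proof is correct and follows essentially the same route as the paper: part (1) is the standard compensated-Poisson-measure argument that the paper delegates to Klartag--Lehec, your part (2) is precisely the computation the paper records as Lemma \ref{lem:lambda_mart} (drift killed by \eqref{eq:heat_exp_G}), and part (3) is the same combination of (1) and (2), with your identification of the constant via the deterministic value $\lambda_0=\Pheat_Tf(1)=\EE[\mu]/T$ being an equally valid alternative to the paper's evaluation at $t=T$ using $X_T\sim\mu$.
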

\begin{proof}
Item (1) can be found in \cite[p. 100]{Klartag_Lehec}, while item (2) can be found in \cite[Lemma 3.3]{lopez2024poisson} (see also Lemma \ref{lem:lambda_mart} below). Taking expectation in \eqref{eq:compens_def}, and using item (2), we find that $\EE[X_t]=t\,\EE[\lambda_t]$. Using $\EE[X_t]=t\,\EE[\lambda_t]$ with $t=T$ gives, since $X_T\sim \mu$, $\EE[\mu]=\EE[X_T]=T\,\EE[\lambda_T]=T\,\EE[\lambda_t]$, where the last equality holds by item (2). It follows that $\EE[\lambda_t]=\frac{\EE[\mu]}{T}$, and hence $t\EE[\lambda_t]=\frac{t}{T}\EE[\mu]$.
\end{proof}
As we saw in Lemma \ref{lem:lambda_X_properties}, the process  $(\lambda_t)_{t\in [0,T]}$ is a martingale. The next result gives an explicit expression for this martingale. To this end we recall \eqref{eq:Geq} and note that
\begin{equation}
\label{eq:Glambda_eq}
\lambda_t=G(t,X_t).
\end{equation}
We recall that a stochastic integral of the form $\int \cdot \diff X_t$ is simply the sum of the integrand at the jump points of the process $( X_t)_{t\in [0,T]}$; cf. \cite{Klartag_Lehec,  lopez2024poisson}. The  integral against the compensated process $(\tilde X_t)_{t\in [0,T]}$ (Lemma \ref{lem:lambda_X_properties}) is defined as the sum of the integrals $\int \cdot \diff X_t$ and $\int \cdot \diff t$. 
\begin{lemma}
\label{lem:lambda_mart}
We have
\begin{equation}
\label{eq:lambda_1st_var}
\diff \lambda_t=\der G(t,X_t)\diff \tilde X_t.
\end{equation}
\end{lemma}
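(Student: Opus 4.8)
The plan is to compute the stochastic differential of $\lambda_t = G(t, X_t)$ directly using the fact that $G$ depends on time both explicitly (through the first argument) and implicitly (through the jump process $X_t$), and to combine the deterministic drift coming from the PDE \eqref{eq:heat_exp_G} with the jump contribution coming from the counting process. Since $(X_t)_{t\in[0,T]}$ is a pure-jump process with unit jumps at rate $\lambda_t$, the natural tool is the It\^o formula for jump processes, which here takes an especially simple form because the jumps are of size exactly $+1$.

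Concretely, I would write $\diff\lambda_t = \partial_t G(t, X_t)\diff t + \bigl(G(t, X_t + 1) - G(t, X_t)\bigr)\diff X_t$, where the first term is the smooth-in-time drift and the second term records the change in $G$ each time $X_t$ jumps up by one. The increment $G(t, X_t+1) - G(t, X_t)$ is exactly $\der G(t, X_t)$ by the definition \eqref{eq:derivative_def} of the discrete derivative $\der$. For the drift term, Lemma \ref{lem:time-der-G}, specifically equation \eqref{eq:heat_exp_G}, gives $\partial_t G(t, X_t) = -G(t, X_t)\,\der G(t, X_t)$. Substituting both pieces yields
\[
\diff\lambda_t = -G(t, X_t)\,\der G(t, X_t)\diff t + \der G(t, X_t)\diff X_t = \der G(t, X_t)\bigl(\diff X_t - G(t, X_t)\diff t\bigr).
\]
Recalling from \eqref{eq:Glambda_eq} that $\lambda_t = G(t, X_t)$ and from \eqref{eq:compens_def} that the compensated process satisfies $\diff\tilde X_t = \diff X_t - \lambda_t\diff t$, the term in parentheses is precisely $\diff\tilde X_t$, which delivers the claimed identity \eqref{eq:lambda_1st_var}.

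The main point requiring care — and what I expect to be the chief obstacle — is justifying the It\^o expansion rigorously in the discrete jump setting, namely verifying that there is no second-order correction term (as there would be in the diffusion case) and that the compensator of $\diff X_t$ is indeed $\lambda_t\diff t$. This is where the predictability and boundedness of $(\lambda_t)$, together with the structure of the Poisson point process $N$ underlying \eqref{eq:Xt_def}, must be invoked: because the jumps are of fixed size $+1$ and occur at the $\mathcal F_t$-intensity $\lambda_t$, the compensated process $\tilde X_t$ is a martingale (Lemma \ref{lem:lambda_X_properties}(1)), and the chain rule for such finite-variation jump processes is exact with no It\^o correction. One should also confirm that the informal splitting of $\diff G(t, X_t)$ into a time-derivative piece and a jump piece is valid, i.e. that between jumps $X_t$ is constant so the only change is through $\partial_t G$, while at a jump the time argument does not change but the spatial argument increments. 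Once these structural facts are in place, the computation above is entirely mechanical.
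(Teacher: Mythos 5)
Your proposal is correct and is essentially identical to the paper's argument: the paper likewise writes $\lambda_t=G(t,X_t)$ as a time-derivative integral plus a jump integral $\int_0^t \der G(s,X_s)\diff X_s$, switches to the compensated process $\tilde X_s$, and observes that the resulting drift $\partial_s G+G\,\der G$ vanishes by \eqref{eq:heat_exp_G}. The only difference is presentational (differential versus integral notation, plus your added remarks on justifying the jump-process chain rule, which the paper takes as given).
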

\begin{proof}
\begin{align*}
\lambda_t&=G(t,X_t)=\int_0^t \partial_sG(s,X_s) \diff s+\int_0^t \der G(s,X_s)\diff X_s\\
&=\int_0^t [\partial_sG(s,X_s) +\der G(s,X_s)G(s,X_s)]\diff s+\int_0^t \der G(s,X_s)\diff \tilde X_s,
\end{align*}
and the first integrand vanishes by \eqref{eq:heat_exp_G}. 
\end{proof}

\section{Improvement of Wu's inequality}
\label{sec:stab_inq}
This section is dedicated to the proof of our main result, Theorem \ref{thm:stab_ultra_log_concave_intro}. We begin with Section \ref{subsec:wu_inq_proof} where we provide a new proof of Wu's inequality \eqref{eq:Wu_inq_intro} using the Poisson-F\"ollmer process. This proof is the analogue of Lehec's proof of the Gaussian logarithmic Sobolev inequality using the F\"ollmer process---see Section \ref{sec:gauss}. To prove the improvement \eqref{eq:stab_ultra} we will derive in Section \ref{subsec:iden_deficit} an identity for the deficit \eqref{eq:deficit_def} using the Poisson-F\"ollmer process. Using this identity we prove Theorem \ref{thm:stab_ultra_log_concave_intro} in Section \ref{subsec:improv}.

\subsection{Proof of Wu's inequality via the Poisson-F\"ollmer process}
\label{subsec:wu_inq_proof}

It suffices to prove Theorem \ref{thm:Wu} under the assumption that $f$ is bounded from above and below \cite[\S 1.4]{MR1800540}. In particular, as explained in Section \ref{subsec:PF_process}, this assumption guarantees the existence of the Poisson-F\"ollmer process. Our proof of Wu's inequality hinges on the following entropy representation formula  \cite[Remark 3, p. 102]{Klartag_Lehec}\footnote{In Section \ref{sec:eq-wu} we provide a proof of this result under more relaxed assumptions on $f$.}, where we use the notation
\begin{equation}
\label{eq:rel_ent_def}
\HH(\mu|\pi_T):=\sum_{k=0}^{\infty} \log\left(\frac{\mu(k)}{\pi_T(k)}\right)\mu(k),
\end{equation}
for the relative entropy of a positive measure $\mu$ on $\N$ with respect to $\pi_T$.  Note that if $\mu=f\pi_T$, then
\begin{equation}
\label{eq:relent_ent}
\HH(\mu|\pi_T)=\Ent_{\pi_T}[f].
\end{equation}

\begin{proposition}[Entropy representation formula]
\label{prop:entropy_rep}
Let $\mu=f\pi_T$ be a probability measure on $\mathbb N$  for which the Poisson-F\"ollmer process  $(X_t)_{t\in [0,T]}$ associated to $\mu$ exists\footnote{The existence of the Poisson-F\"ollmer process is guaranteed if $f$ is either bounded or log-concave; see  Section \ref{subsec:PF_process}.}. Let $(\lambda_t)_{t\in [0,T]}$ be as in \eqref{eq:lambda_def}. Then,
\label{prop:entropy_rep}
\begin{equation}
\label{eq:entropy_rep_lambda}
\HH(\mu|\pi_T)=\int_0^T\EE[\lambda_t\log\lambda_t-\lambda_t+1]\diff t.
\end{equation}
\end{proposition}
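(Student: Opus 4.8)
The plan is to track the quantity $F(t,X_t)$ along the Poisson-F\"ollmer process and to read off the formula from its behavior at the two endpoints $t=0$ and $t=T$. At the terminal time, since $\Pheat_0f=f$ and $X_T\sim\mu$, we have $F(T,X_T)=\log f(X_T)$, and because $\mu(k)/\pi_T(k)=f(k)$ the definition \eqref{eq:rel_ent_def} gives $\EE[F(T,X_T)]=\HH(\mu|\pi_T)$. At the initial time $X_0=0$ and $\Pheat_Tf(0)=\sum_n f(n)\pi_T(n)=\int f\diff\pi_T=1$ (as $\mu$ is a probability measure), so $F(0,X_0)=\log 1=0$. Hence it suffices to evaluate $\EE[F(T,X_T)]-\EE[F(0,X_0)]=\EE[F(T,X_T)]$ by a stochastic chain rule.

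First I would apply the change-of-variables (It\^o) formula for the pure-jump process $(X_t)$ to the function $F(t,\cdot)$. Since $X$ increases by one unit at each jump, the jump of $F(t,X_t)$ at a jump time is exactly $\der F$ evaluated just before the jump, so
\begin{equation*}
F(T,X_T)=F(0,X_0)+\int_0^T\partial_t F(t,X_t)\diff t+\int_0^T\der F(t,X_t)\diff X_t.
\end{equation*}
Now I invoke the two identities already available. By \eqref{eq:heat_exp} the time derivative is $\partial_t F(t,X_t)=1-e^{\der F(t,X_t)}=1-G(t,X_t)=1-\lambda_t$, using \eqref{eq:Glambda_eq}; and from \eqref{eq:Geq} the spatial increment is $\der F(t,X_t)=\log G(t,X_t)=\log\lambda_t$.

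Next I would pass to the compensated process. Writing $\diff X_t=\diff\tilde X_t+\lambda_t\diff t$ from \eqref{eq:compens_def}, the stochastic integral splits as
\begin{equation*}
\int_0^T\der F(t,X_t)\diff X_t=\int_0^T\log(\lambda_t)\,\lambda_t\diff t+\int_0^T\log(\lambda_t)\diff\tilde X_t.
\end{equation*}
The integral against the martingale $(\tilde X_t)_{t\in[0,T]}$ from Lemma \ref{lem:lambda_X_properties} has zero expectation, so taking expectations and collecting the drift terms gives
\begin{equation*}
\HH(\mu|\pi_T)=\EE[F(T,X_T)]=\int_0^T\EE\!\left[(1-\lambda_t)+\lambda_t\log\lambda_t\right]\diff t,
\end{equation*}
which is precisely \eqref{eq:entropy_rep_lambda}.

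The main obstacle is analytic rather than algebraic: one must justify the chain rule, the interchange of expectation with the time integration, and in particular that $\int_0^T\log(\lambda_t)\diff\tilde X_t$ is a genuine martingale whose expectation vanishes. All of these rest on integrability, which I would obtain from the standing hypothesis that $f$ is bounded: this controls $\Pheat_{T-t}f$, and hence the integrands $\lambda_t$, $\lambda_t\log\lambda_t$, and $1-\lambda_t$, uniformly enough to apply Fubini and the martingale property. The remaining ingredients---the boundary evaluations of $F$ and the reduction to the two endpoints---are the elementary computations recorded above.
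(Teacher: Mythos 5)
Your proof is correct, and it takes a genuinely different route from the paper's. The paper avoids stochastic calculus entirely at this step: it writes $\HH(\mu|\pi_T)=\int_0^T\partial_t\EE[\log\Pheat_{T-t}f(X_t)]\diff t$, substitutes the explicit marginal law $X_t\sim(\Pheat_{T-t}f)\pi_t$ from Lemma \ref{lem:law_Xt}, and then differentiates and reorganizes the resulting sums (using $\partial_t\pi_t(k)=\pi_t(k-1)-\pi_t(k)$ and an index shift) until $\EE[-\lambda_t+1]$ and $\EE[\lambda_t\log\lambda_t]$ appear. You instead run the pathwise chain rule for the jump process applied to $F(t,X_t)$, feed in $\partial_tF=1-\lambda_t$ from \eqref{eq:heat_exp} and $\der F(t,X_t)=\log\lambda_t$, and compensate the jump integral so that the drift produces $\lambda_t\log\lambda_t$ and the martingale part dies under expectation. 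This is exactly the technique the paper deploys later (in Lemma \ref{lem:phi_der}, for $\phi(\lambda_t)=\phi(G(t,X_t))$), so your argument is fully consistent with the paper's toolbox and arguably more unified; what the paper's marginal-law computation buys in exchange is that it sidesteps the martingale-integrability justification for $\int_0^T\log(\lambda_t)\diff\tilde X_t$, which you correctly flag as the one analytic point requiring care (it follows from boundedness of $\lambda_t|\log\lambda_t|$ on bounded sets together with the boundedness assumptions underlying the Klartag--Lehec construction). Both proofs rest on the same endpoint observations $\EE[F(T,X_T)]=\HH(\mu|\pi_T)$ and $F(0,X_0)=0$, and both are sound.
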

With Equation \eqref{eq:entropy_rep_lambda} in hand we can provide the following one-line proof of Theorem \ref{thm:Wu}. By Lemma \ref{lem:lambda_X_properties}(2) the process $(\lambda_t)_{t\in [0,T]}$ is a martingale, so since the function $(0,\infty)\ni r\mapsto r\log r-r+1$ is convex, we get
\begin{equation}
\label{eq:one-line-proof}
\HH(\mu|\pi_T)\overset{\eqref{eq:entropy_rep_lambda}}{=}\int_0^T\EE[\lambda_t\log\lambda_t-\lambda_t+1]\diff t\overset{\textnormal{Jensen's inq.}}{\le} T\,\EE[\lambda_T\log\lambda_T-\lambda_T+1].
\end{equation}
The right-hand side of \eqref{eq:one-line-proof} is precisely the right-hand side of \eqref{eq:Wu_inq_intro}. 

\subsection{An identity for the deficit in  Wu's inequality}
\label{subsec:iden_deficit}
We will now derive an identity for the deficit \eqref{eq:deficit_def} using the Poisson-F\"ollmer process. To state the deficit identity denote 
\begin{equation}
\label{eq:D2_def}
\der^2:=\der\circ \der,
\end{equation}
and also recall the definition \eqref{eq:Feq} of $F(t,k)$. 
\begin{proposition}
\label{prop:deficit_identity}
Let $\mu=f\pi_T$ be a probability measure on $\mathbb N$  for which the Poisson-F\"ollmer process  $(X_t)_{t\in [0,T]}$ associated to $\mu$ exist, and let $(\lambda_t)_{t\in [0,T]}$ be defined by \eqref {eq:lambda_def}. Then, 
\[
\delta(f)=\int_0^T\int_t^T\EE\left[\lambda_s^2\HH\left(\pi_{e^{\der^2F(s,X_s)}}\bigg |\pi_1\right) \right]\diff s\diff t.
\]
\end{proposition}
The proof of Proposition \ref{prop:deficit_identity} requires  a number of preliminary results. Recall that by Proposition \ref{prop:entropy_rep}, 
\[
\HH(\mu|\pi_T)=\int_0^T\EE[\phi(\lambda_t)]\diff t,
\]
where $\phi:(0,\infty)\to [0,\infty)$ is a convex function given by 
\begin{equation}
\label{eq:rate_fn_poisson}
\phi(r):=r\log r-r+1. 
\end{equation}
The next result uses Taylor expansion to re-express $\EE[\phi(\lambda_t)]$, and hence $\HH(\mu|\pi_T)$. 
\begin{lemma}
\label{lem:phi_der}
Let $G$ be as in \eqref{eq:Geq}. Then,
\[
\EE[\phi(\lambda_t)]=\int_0^t\lambda_s[\phi(\lambda_s+\der G(s,X_s))-\phi(\lambda_s)-\phi'(\lambda_s)\der G(s,X_s)]\diff s.
\]
\end{lemma}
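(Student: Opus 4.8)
The plan is to apply the change-of-variables (It\^o / Dynkin) formula for the pure-jump process $(X_t)$ to the functional $\phi(\lambda_t)$, and then to recognize the resulting drift via the heat-type identity of Lemma \ref{lem:time-der-G}. The starting point is that, by \eqref{eq:Glambda_eq}, $\lambda_t=G(t,X_t)$, so $\phi(\lambda_t)=\phi(G(t,X_t))$ is a time-smooth function of the counting process evaluated along its own trajectory. For such a functional the jump It\^o formula splits the increment into a drift part (the explicit time-dependence of $G$) and a jump part (the increments of $X$):
\[
\phi(G(t,X_t))=\phi(G(0,X_0))+\int_0^t \partial_s\phi(G(s,X_s))\diff s+\int_0^t \big[\phi(G(s,X_{s^-}+1))-\phi(G(s,X_{s^-}))\big]\diff X_s,
\]
where $\diff X_s$ records the unit jumps of $(X_t)$ and $\partial_s$ differentiates only the explicit time variable of $G$.

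Next I would evaluate the two pieces. For the drift, the chain rule gives $\partial_s\phi(G(s,X_s))=\phi'(\lambda_s)\,\partial_sG(s,X_s)$, and here the identity \eqref{eq:heat_exp_G}, $\partial_sG=-G\,\der G$, is precisely what is needed: it turns the drift into $-\phi'(\lambda_s)\,\lambda_s\,\der G(s,X_s)$. For the jump part, since $X$ increases by one at each jump and $G(s,X_{s^-}+1)=G(s,X_{s^-})+\der G(s,X_{s^-})=\lambda_s+\der G(s,X_s)$, the jump of $\phi(G)$ equals $\phi(\lambda_s+\der G(s,X_s))-\phi(\lambda_s)$. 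I would then compensate the jump integral using the martingale of Lemma \ref{lem:lambda_X_properties}(1), writing $\diff X_s=\diff\tilde X_s+\lambda_s\diff s$, so that upon taking expectations the martingale term $\int_0^t[\cdots]\diff\tilde X_s$ vanishes and the jump contribution becomes $\EE\big[\int_0^t \lambda_s\,(\phi(\lambda_s+\der G(s,X_s))-\phi(\lambda_s))\diff s\big]$.

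Collecting the drift and the compensated jump contributions, their sum under the integral is exactly $\lambda_s\big[\phi(\lambda_s+\der G(s,X_s))-\phi(\lambda_s)-\phi'(\lambda_s)\der G(s,X_s)\big]$, which is the claimed integrand. The step I expect to require the most care is justifying that the stochastic integral against the compensated martingale $(\tilde X_t)$ has zero expectation: this needs a genuine integrability (or boundedness) input on the predictable integrand $\phi'(\lambda_s)$ and $\der G(s,X_{s^-})$, which should follow from the standing boundedness of $f$ (so that $\Pheat_{T-s}f$, and hence $G$ and $\lambda$, remain in a compact range), possibly combined with a localization/stopping-time argument. The remaining point to track is the boundary term $\phi(G(0,X_0))=\phi(\lambda_0)$: since $X_0=0$ deterministically, $\lambda_0=\Pheat_Tf(1)/\Pheat_Tf(0)=\EE[\mu]/T$ is a constant, and one should check how it is incorporated; it is in any case harmless downstream, as it cancels when the identity is used to form the deficit $\delta(f)$ in Proposition \ref{prop:deficit_identity}.
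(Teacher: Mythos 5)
Your proposal follows essentially the same route as the paper's proof: the jump It\^o formula applied to $\phi(G(t,X_t))$, the identity $\partial_sG=-G\,\der G$ from Lemma \ref{lem:time-der-G} to rewrite the drift, the compensation $\diff X_s=\diff\tilde X_s+\lambda_s\diff s$, and taking expectations to kill the martingale term. Your remark about the boundary term $\phi(\lambda_0)$ is a fair catch—the paper's displayed identity silently omits it—but, as you note, only the derivative $\partial_s\EE[\phi(\lambda_s)]$ is used downstream in Proposition \ref{prop:deficit_identity}, so nothing is affected.
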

\begin{proof}
Recall the martingale $(\tilde X_t)_{t\in [0,T]}$ from Lemma \ref{lem:lambda_X_properties}, and recall Equation \eqref{eq:Glambda_eq}. We have 
\begin{align*}
\phi(\lambda_t)&=\phi(G(t,X_t))=\int_0^t\partial_s(\phi\circ G)(s,X_s)\diff s+\int_0^t\der (\phi\circ G)(s,X_s)\diff X_s\\
&=\int_0^t\phi'(G(s,X_s))\partial_sG(s,X_s)\diff s+\int_0^t[\phi(G(s,X_s+1))-\phi(G(s,X_s))]\diff X_s\\
&\overset{\eqref{eq:heat_exp_G}}{=}\int_0^t[-\phi'(\lambda_s)\lambda_s\der G(s,X_s)]\diff s+\int_0^t[\phi(G(s,X_s)+\der G(s,X_s))-\phi(G(s,X_s))]\diff X_s\\
&=\int_0^t\lambda_s[\phi(G(s,X_s)+\der G(s,X_s))-\phi(G(s,X_s))-\phi'(\lambda_s)\der G(s,X_s)]\diff s\\
&+\int_0^t[\phi(G(s,X_s)+\der G(s,X_s))-\phi(G(s,X_s))]\diff \tilde X_s\\
&=\int_0^t\lambda_s[\phi(\lambda_s+\der G(s,X_s))-\phi(\lambda_s)-\phi'(\lambda_s)\der G(s,X_s)]\diff s\\
&+\int_0^t[\phi(G(s,X_s)+\der G(s,X_s))-\phi(G(s,X_s))]\diff \tilde X_s.
\end{align*}
Since $(\tilde X_s)$ is a martingale, taking expectation completes the proof. 
\end{proof}

We need two more results. The first is an identity for the relative entropy between Poisson measures with different intensities, whose  proof is immediate.
\begin{lemma}
\label{lem:entropy_between_Poisson}
\[
\HH(\pi_{\alpha}|\pi_{\beta})=\beta\HH\left(\pi_{\frac{\alpha}{\beta}}|\pi_1\right)=\alpha\left(\frac{\beta}{\alpha}-\log \left(\frac{\beta}{\alpha}\right)-1\right),\quad \forall~\alpha,\beta> 0.
\]
\end{lemma}
The second result shows that the integrand in Lemma \ref{lem:phi_der} can be written as relative entropy between Poisson measures. The proof is again immediate.
\begin{lemma}
\label{lem:phi_Taylor}
Let $\phi(r)=r\log r-r+1$ for $r\ge 0$. Then, for any $x,y> 0$,
\[
\phi(y)-\phi(x)-\phi'(x)(y-x)=\HH(\pi_y|\pi_x).
\]
\end{lemma}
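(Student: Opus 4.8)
The plan is to reduce both sides to the same closed-form expression in $x$ and $y$ by direct computation. First I would record that $\phi'(r)=\log r$, since $\phi(r)=r\log r-r+1$ gives $\phi'(r)=(\log r+1)-1=\log r$. With this, the left-hand side becomes
\[
\phi(y)-\phi(x)-\phi'(x)(y-x)=(y\log y-y)-(x\log x-x)-\log x\,(y-x),
\]
where the additive constants $+1$ cancel. Expanding $-\log x\,(y-x)=-y\log x+x\log x$ and collecting terms, the $x\log x$ contributions cancel and one is left with $y(\log y-\log x)-y+x=y\log(y/x)-y+x$.

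Next I would identify the right-hand side with this same expression. The cleanest route is to invoke Lemma~\ref{lem:entropy_between_Poisson}, which with $\alpha=y$ and $\beta=x$ gives
\[
\HH(\pi_y|\pi_x)=y\left(\frac{x}{y}-\log\left(\frac{x}{y}\right)-1\right)=x+y\log(y/x)-y,
\]
matching the simplified left-hand side exactly. Alternatively, one can compute $\HH(\pi_y|\pi_x)$ from scratch: since $\log(\pi_y(k)/\pi_x(k))=(x-y)+k\log(y/x)$, summing against $\pi_y$ and using $\sum_k\pi_y(k)=1$ together with the Poisson mean $\sum_k k\,\pi_y(k)=y$ yields $(x-y)+y\log(y/x)$, the same quantity.

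There is no substantial obstacle here; the identity is purely algebraic and the only point requiring any care is the bookkeeping of the logarithmic terms, which is why the statement is flagged as immediate. The computation also transparently explains the role of $\phi$: the left-hand side is precisely the Bregman divergence of the convex function $\phi$, and the lemma records that this divergence coincides with the relative entropy between the corresponding Poisson laws $\pi_y$ and $\pi_x$.
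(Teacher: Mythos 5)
Your proof is correct and is exactly the direct computation the paper has in mind when it declares the lemma ``immediate'': both sides reduce to $y\log(y/x)-y+x$, with the right-hand side obtained either from Lemma \ref{lem:entropy_between_Poisson} (with $\alpha=y$, $\beta=x$) or from the definition \eqref{eq:rel_ent_def} of $\HH$ together with the Poisson mean. Your closing remark that the left-hand side is the Bregman divergence of $\phi$ is a nice conceptual gloss, but the argument itself matches the paper's intended route.
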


Combining the above results we can now prove the deficit identity.

\begin{proof}[Proof of Proposition \ref{prop:deficit_identity}]

By Lemma \ref{lem:phi_der}, Lemma \ref{lem:entropy_between_Poisson}, and Lemma \ref{lem:phi_Taylor} we have
 \begin{align*}
&\partial_s\EE[\phi(\lambda_s)]\overset{\text{Lemma \ref{lem:phi_der}}}{=}\EE\left[\lambda_s\{\phi(\lambda_s+\der G(s,X_s))-\phi(\lambda_s)-\phi'(\lambda_s)\der G(s,X_s)\}\right]\\
&\overset{\text{Lemma \ref{lem:phi_Taylor}}}{=}\EE\left[\lambda_s\HH\left(\pi_{\lambda_s+\der G(s,X_s)}|\pi_{\lambda_s}\right) \right]\overset{\text{Lemma \ref{lem:entropy_between_Poisson}}}{=}\EE\left[\lambda_s^2\HH\left(\pi_{\frac{\lambda_s+\der G(s,X_s)}{\lambda_s}}\bigg |\pi_1\right) \right]\\
&=\EE\left[\lambda_s^2\HH\left(\pi_{e^{\der^2F(s,X_s)}}\bigg |\pi_1\right) \right],
\end{align*}
where the last equality used
\begin{align*}
\der G(t,k)=\frac{\Pheat_{T-t}f(k+2)}{\Pheat_{T-t}f(k+1)}-\frac{\Pheat_{T-t}f(k+1)}{\Pheat_{T-t}f(k)}=\frac{\Pheat_{T-t}f(k+2)}{\Pheat_{T-t}f(k+1)}-G(t,k),
\end{align*}
so
\begin{align*}
\frac{ G(t,k)+\der G(t,k)}{G(t,k)}=\frac{\Pheat_{T-t}f(k+2)\Pheat_{T-t}f(k)}{\Pheat_{T-t}f(k+1)^2}=e^{\der ^2\log \Pheat_{T-t}f(k)}.
\end{align*}
Finally, the proof is complete since by Proposition \ref{prop:entropy_rep} and the proof of Theorem \ref{thm:Wu}, 
\begin{align}
\label{eq:delta_phi}
\delta(f)=\int_0^T\EE[\phi(\lambda_T)-\phi(\lambda_t)]\diff t=\int_0^T\int_t^T\partial_s\EE[\phi(\lambda_s)]\diff s\diff t.
\end{align}
\end{proof}
Proposition \ref{prop:deficit_identity} is our main tool to obtain an improvement of Wu's inequality by almost-surely lower bounding  $\HH\left(\pi_{e^{\der^2F(s,X_s)}} |\pi_1\right)$. Specifically, Lemma \ref{lem:entropy_between_Poisson} shows that the map $\alpha\mapsto \HH(\pi_{\alpha}|\pi_1)$ is  decreasing on $(0,1]$, so to lower bound $\HH\left(\pi_{e^{\der^2F(s,X_s)}} |\pi_1\right)$ we can show that $e^{\der^2F(s,X_s)}\le 1$, and then upper bound  $e^{\der^2F(s,X_s)}$. We will show this can indeed be done when $f$ is ultra-log-concave.

\subsection{Improvement of Wu's inequality under ultra-log-concavity}
\label{subsec:improv}
In this section we prove Theorem \ref{thm:stab_ultra_log_concave_intro}.
Our first task is to establish the preservation of ultra-log-concavity under the Poisson semigroup.
\begin{lemma}
\label{lem:heat_flow_ulc}
Let $f:\N\to (0,\infty)$ be an ultra-log-concave function. Then, for every $t\ge 0$, $\Pheat_tf:\N\to (0,\infty)$ is also ultra-log-concave.
\end{lemma}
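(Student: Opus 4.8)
The plan is to run a maximum‑principle argument directly on the Poisson flow, exploiting the heat equation $\partial_t\Pheat_t f=\der\Pheat_t f$ from \eqref{eq:heat}. Writing $g_t:=\Pheat_t f$ and keeping the convention $g_t(-1):=0$, I would encode ultra‑log‑concavity through nonnegativity of the defect
\[
D_t(k):=k\,g_t(k)^2-(k+1)\,g_t(k+1)\,g_t(k-1),\qquad k\in\N .
\]
By hypothesis $D_0(k)\ge 0$ for every $k$, so the goal is to show this is propagated: if $D_t(k)\ge 0$ for all $k$, then $D_s(k)\ge 0$ for all $k$ and all $s\ge t$. Since $g_t>0$ and the evolution is smooth in $t$, the natural route is to rule out a first time at which some $D_\cdot(k)$ touches $0$ from above; after a mild a priori reduction (the constraint at index $k$ only couples $g_t(k-1),\dots,g_t(k+2)$, so one may localize the first‑contact analysis) this reduces everything to a single pointwise differential inequality at the saturating index.

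Concretely, I would first compute $\partial_t D_t(k)$ by substituting $\partial_t g_t(j)=g_t(j+1)-g_t(j)$ and grouping terms; the result is an expression in $g_t(k-1),\dots,g_t(k+2)$ alone. At an index $k$ where the constraint is saturated, i.e. $k\,g_t(k)^2=(k+1)\,g_t(k+1)\,g_t(k-1)$, the expression collapses to
\[
\partial_t D_t(k)=(k-1)\,g_t(k)\,g_t(k+1)-(k+1)\,g_t(k+2)\,g_t(k-1).
\]
Thus the entire argument comes down to showing that, at a saturating index, $(k-1)\,g_t(k)\,g_t(k+1)\ge (k+1)\,g_t(k+2)\,g_t(k-1)$, feeding in the ultra‑log‑concavity inequalities at the neighboring indices (the constraint at $k+1$ bounds $g_t(k+2)$ from above, and saturation at $k$ expresses $g_t(k-1)$).

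The step I expect to be the main obstacle is exactly this saturated inequality. For ordinary log‑concavity the analogous computation closes immediately: there the saturated derivative equals $g_t(k)g_t(k+1)-g_t(k-1)g_t(k+2)$, which is nonnegative by multiplying the log‑concavity inequalities at $k$ and $k+1$ and cancelling $g_t(k)g_t(k+1)$. With the position‑dependent weights $k,k+1$ present, the neighboring constraints only control $(k+1)g_t(k+2)g_t(k-1)$ by $\tfrac{k(k+1)}{k+2}\,g_t(k)g_t(k+1)$, whereas the required factor is $(k-1)\,g_t(k)g_t(k+1)$; since $\tfrac{k(k+1)}{k+2}>k-1$, the naive pairing leaves a genuine gap. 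Closing it is the heart of the matter, and I would concentrate the effort there — likely by also invoking the constraint at $k-1$, or by propagating a reweighted defect adapted to the $\tfrac{k+1}{k}$ factors rather than $D_t$ itself. As an alternative I would pass to the measure $\mu=f\pi_T$, identify the law of the Föllmer process $X_t$ with a thinning of $\mu$ (so that, up to the factor $\pi_t$, the density $\Pheat_{T-t}f$ is that of the thinned measure), and invoke preservation of ultra‑log‑concavity under thinning; the obstacle in that approach is to correctly reconcile the function‑level statement for $\Pheat_t f$ with the measure‑level one across the change of reference measure.
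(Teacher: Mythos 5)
There is a genuine gap, and you have identified it yourself: the argument is never closed at the saturated index. Your computation is correct that at a first-contact index $k$ (where $k\,g_t(k)^2=(k+1)g_t(k+1)g_t(k-1)$) the derivative of the defect collapses to $(k-1)g_t(k)g_t(k+1)-(k+1)g_t(k+2)g_t(k-1)$, but the sign of this quantity is exactly what the maximum principle needs and exactly what you do not establish. Worse, your own bookkeeping shows the available constraints point the wrong way: substituting the saturation identity $g_t(k-1)=\tfrac{k\,g_t(k)^2}{(k+1)g_t(k+1)}$ and the ultra-log-concavity constraint at $k+1$ gives
\[
(k-1)g_t(k)g_t(k+1)-(k+1)g_t(k+2)g_t(k-1)\;\ge\;\frac{g_t(k)}{g_t(k+1)}\Bigl[(k-1)-\frac{k(k+1)}{k+2}\Bigr]g_t(k+1)^2=-\frac{2}{k+2}\,g_t(k)\,g_t(k+1),
\]
so the naive pairing leaves a strictly negative deficit for every $k$, not a borderline case that a sharper grouping might rescue. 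Invoking the constraint at $k-1$ does not obviously help either, since it controls $g_t(k-2)$, which does not appear in the expression. As written, the proposal is a plan with the decisive step missing; the secondary issues (justifying a first-contact time for an infinite system of ODEs, and the unproved thinning alternative) are also left open.

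For comparison, the paper's proof is entirely different and avoids the flow altogether: it extends $f$ and $\pi_t$ by zero to $\Z$, sets $\tilde\pi_t(k):=\pi_t(-k)$, observes that $\Pheat_t f=f\ast\tilde\pi_t$, and invokes Liggett's theorem that ultra-log-concave sequences on $\Z$ are closed under convolution. If you want to salvage a semigroup-based argument, the right move is probably not to propagate the unweighted defect $D_t(k)$ but a quantity adapted to the $\tfrac{k+1}{k}$ weights (as you suggest in passing), or simply to reduce to the convolution structure as the paper does.
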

\begin{proof}
The lemma follows from the closability under convolutions of ultra-log-concave functions \cite{MR494391}: If $\{a_k\}_{k\in \mathbb Z}, \{b_k\}_{k\in \mathbb Z}$ are ultra-log-concave then $\{(a\ast b)_k\}_{k\in \mathbb Z}$ is also  ultra-log-concave, where
\begin{equation}
\label{eq:ulc_convolv}
(a\ast b)_k:=\sum_{n=-\infty}^{+\infty}a_nb_{k-n}=\sum_{n=-\infty}^{+\infty}a_{k-n}b_n.
\end{equation}
To apply \eqref{eq:ulc_convolv} in our setting we extend $f$ and $\pi_t$ to $\mathbb Z$ by setting them to zero on $\{k\in \mathbb Z:k<0\}$, and note that $f$ and $\pi_t$ remain ultra-log-concave as functions on $\mathbb Z$. Now let $\tilde{\pi}_t$ be given by $\tilde{\pi}_t(k):=\pi_t(-k)$ for $k\in \mathbb Z$, and note that $\tilde{\pi}_t$ is ultra-log-concave (as $\pi_t$ yields equality in \eqref{eq:alpha_lcvx_intro_def}). The proof is complete as $\Pheat_t f=(f\ast \tilde{\pi}_t)$. 
\end{proof}

The next result gives a useful bound for ultra-log-concave functions.
\begin{lemma}
\label{lem:ulc_prop}
Let $f:\N\to (0,\infty)$ be an ultra-log-concave function. Then, for every $k\in\N$,
\[
\frac{f(k+2)f(k)}{f(k+1)^2}\le \frac{1}{1+c\frac{f(k+1)}{f(k)}}<1\quad\text{with}\quad c:=\frac{f(0)}{f(1)}.
\]
\end{lemma}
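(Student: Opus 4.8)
The plan is to reduce everything to a single monotonicity statement about the successive ratios $r_k:=f(k+1)/f(k)$. First I would record the immediate consequence of ultra-log-concavity \eqref{eq:alpha_lcvx_intro_def} applied at the index $k+1$, namely $(k+1)f(k+1)^2\ge (k+2)f(k+2)f(k)$, which rearranges to
\[
\frac{f(k+2)f(k)}{f(k+1)^2}\le \frac{k+1}{k+2}=\frac{1}{1+\frac{1}{k+1}}.
\]
Since $x\mapsto \frac{1}{1+x}$ is decreasing on $\R_{\ge 0}$, the desired bound $\frac{f(k+2)f(k)}{f(k+1)^2}\le \frac{1}{1+c\,r_k}$ will follow once I show the pointwise inequality $c\,r_k\le \frac{1}{k+1}$, i.e. $c(k+1)r_k\le 1$; the strict inequality $<1$ is then automatic because $c\,r_k>0$ (as $f$ is strictly positive).

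The heart of the matter is therefore the claim $c(k+1)r_k\le 1$ for every $k\in\N$, where $c=f(0)/f(1)=1/r_0$. To prove it I would set $s_k:=c(k+1)r_k$ and establish that $(s_k)_{k\in\N}$ is nonincreasing with $s_0=1$. The initial value is immediate: $s_0=c\,r_0=1$. For the monotonicity, rewrite ultra-log-concavity \eqref{eq:alpha_lcvx_intro_def} at the index $k$ (for $k\ge 1$), $k f(k)^2\ge (k+1)f(k+1)f(k-1)$, and divide by $f(k)f(k-1)>0$ to obtain $k\,r_{k-1}\ge (k+1)r_k$. Multiplying by $c$ gives exactly $s_{k-1}\ge s_k$. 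Hence $s_k\le s_0=1$ for all $k$, which is the claim; note that the boundary case $k=0$ is covered purely by the base value $s_0=1$, so the convention $f(-1)=0$ never enters the recursion.

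The main (and essentially only) obstacle is recognizing the correct quantity $s_k=c(k+1)r_k$ whose monotonicity packages ultra-log-concavity in exactly the form needed; once this is identified, every step is a one-line manipulation. As a sanity check, the Poisson-type function $f(k)=t^k/k!$ (which saturates ultra-log-concavity) gives $r_k=t/(k+1)$ and $c=1/t$, so $s_k\equiv 1$ and $\frac{f(k+2)f(k)}{f(k+1)^2}\equiv \frac{1}{1+c\,r_k}\equiv\frac{k+1}{k+2}$. Thus the first inequality in the lemma is sharp, while the strict inequality $<1$ merely reflects the positivity of $c\,r_k$.
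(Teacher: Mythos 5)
Your proof is correct, and it takes a genuinely more self-contained route than the paper. The paper's proof is a two-line citation: it invokes \cite[Lemma 5.1]{MR3729642}, which says that ultra-log-concavity implies $c$-log-concavity with $c=f(0)/f(1)$, i.e.\ $\frac{f(k+1)^2-f(k+2)f(k)}{f(k+1)f(k+2)}\ge c$, and then rearranges this into the stated bound. Your argument reproves that external lemma from scratch: the key observation that $s_k:=c(k+1)\frac{f(k+1)}{f(k)}$ is nonincreasing with $s_0=1$ is exactly the content of the cited result, repackaged as an induction on the ratio recursion $k\,r_{k-1}\ge (k+1)r_k$. Each step checks out (in particular the $k=0$ case is handled by the base value $s_0=1$ alone, and the convention $f(-1)=0$ is never needed). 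What your route buys is self-containedness and, as a by-product, the sharper intermediate bound $\frac{f(k+2)f(k)}{f(k+1)^2}\le\frac{k+1}{k+2}\le\frac{1}{1+c\,r_k}$, together with the observation that the first inequality is saturated by Poisson weights; what the paper's route buys is brevity, at the cost of an external reference.
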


\begin{proof}
By \cite[Lemma 5.1]{MR3729642} the fact that $f$ is ultra-log-concave means that $f$ is \emph{$c$-log-concave} with $c:=\frac{f(0)}{f(1)}$:
\begin{equation}
\label{eq:c-log-concave}
\frac{f(k+1)^2-f(k+2)f(k)}{f(k+1)f(k+2)}\ge c.
\end{equation}
Multiplying both sides of \eqref{eq:c-log-concave} by $\frac{f(k+1)}{f(k)}$, and rearranging, complete the proof.
\end{proof}

We are now ready for the proof of Theorem \ref{thm:stab_ultra_log_concave_intro}.  The function $\Pheat_{T-t}f$ is ultra-log-concave by  Lemma \ref{lem:heat_flow_ulc}, so Lemma \ref{lem:ulc_prop} can be applied to give
\begin{equation}
\label{eq:expDD_ulc}
e^{\der^2F(s,X_s)}=\frac{\Pheat_{T-s}f(X_s+2)\Pheat_{T-s}f(X_s)}{(\Pheat_{T-s}f)^2(X_s+1)}\le \frac{1}{1+c_s\frac{\Pheat_{T-s}f(X_s+1)}{\Pheat_{T-s}f(X_s)}}=\frac{1}{1+c_s\lambda_s},
\end{equation}
with $c_s:=\frac{\Pheat_{T-s}f(0)}{\Pheat_{T-s}f(1)}$. Next we will lower bound $c_s$. We claim that
\begin{equation}
\label{eq:ct_bd}
c_s=\frac{\Pheat_{T-s}f(0)}{\Pheat_{T-s}f(1)}\ge \frac{f(0)}{f(1)}.
\end{equation}
Indeed, it suffices to show that the function $[0,T]\ni s\mapsto \eta(s):=\frac{\Pheat_{T-s}f(0)}{\Pheat_{T-s}f(1)}$ is non-increasing since the right-hand side of \eqref{eq:ct_bd} is equal to $\eta(T)$. The latter holds since, using \eqref{eq:heat}, we have 
\[
\partial_s\eta(s)=-\frac{1}{(\Pheat_{T-s}f)^2(1)}\left\{(\Pheat_{T-s}f)^2(1)-\Pheat_{T-s}f(2)\Pheat_{T-s}f(0)\right\}\le 0,
\]
where the inequality holds as $\Pheat_{T-s}f$ is ultra-log-concave (Lemma \ref{lem:heat_flow_ulc}). Combining \eqref{eq:expDD_ulc} and \eqref{eq:ct_bd} we conclude that 
\begin{equation}
\label{eq:expDD_ulc_final}
e^{\der^2F(s,X_s)}\le \frac{1}{1+\frac{f(0)}{f(1)}\lambda_s}<1.
\end{equation}
Since $\alpha\mapsto \HH(\pi_{\alpha}|\pi_1)$ is decreasing on $(0,1]$, we get from \eqref{eq:expDD_ulc_final} that, almost-surely,
\begin{equation}
\label{eq:rel_ent_estimate_stochastic}
\HH\left(\pi_{e^{\der^2F(s,X_s)}}\bigg |\pi_1\right) \ge \HH\left(\pi_{(1+\frac{f(0)}{f(1)}\lambda_s)^{-1}}\bigg |\pi_1\right).
\end{equation}
It follows from Proposition \ref{prop:deficit_identity} that
\begin{align*}
\delta(f)&\ge \int_0^T\int_t^T\EE\left[\lambda_s^2\HH\left(\pi_{(1+\frac{f(0)}{f(1)}\lambda_s)^{-1}}\bigg |\pi_1\right) \right]\diff s\diff t= \int_0^T\int_t^T\EE\left[\Theta_{\frac{f(0)}{f(1)}}(\lambda_s) \right]\diff s\diff t,
\end{align*}
where the last equality follows from the definition of $\Theta_c$ and Lemma \ref{lem:entropy_between_Poisson}. The function $z\mapsto \Theta_c(z)$ can be verified to be convex, 
so by Jensen's inequality,
\begin{align}
\label{eq:Jensen_Phi}
\delta(f)\ge  \int_0^T\int_t^T\Theta_{\frac{f(0)}{f(1)}}\left(\EE[\lambda_s]\right)\diff s\diff t\overset{\eqref{eq:Xt_mean}}{=}\int_0^T\int_t^T\Theta_{\frac{f(0)}{f(1)}}\left(\frac{\EE[\mu]}{T}\right)\diff s\diff t=\frac{T^2}{2}\,\Theta_{\frac{f(0)}{f(1)}}\left(\frac{\EE[\mu]}{T}\right).
\end{align}

\section{The equality cases of Wu's inequality}
\label{sec:eq-wu}
In this section we characterize the equality cases of Wu's inequality \eqref{eq:Wu_inq_intro}. This characterization is not needed for our proof of \eqref{eq:Wu_inq_intro} via the Poisson-F\"ollmer process, or for the proof of Theorem \ref{thm:stab_ultra_log_concave_intro}, but we could not find this result in the literature so we add it for completeness. The first step towards this characterization is to get an entropy representation formula for $\Ent_{\pi_T}[f]$ which holds for a broad class of functions. Let us set up some notation first. We define
\begin{equation}
\label{eq:Phi_def}
\Phi(r):=r\log r,\quad \quad r\ge 0,
\end{equation}
and define the function $\Psi(u,v)$, for $u>0$ and $u+v>0$, by
\begin{equation}
\label{eq:Psi_def}
\begin{split}
\Psi(u,v)&:=\Phi(u+v)-\Phi(u)-\Phi'(u)v\\
&=(u+v)\log(u+v)-u\log u-(\log u+1)v.
\end{split}
\end{equation}
Note that $\Psi$ is nonnegative and convex when $u>0$ and $u+v>0$  \cite[\S 1.4]{MR1800540}. With the above notation, Wu's inequality \eqref{eq:Wu_inq_intro} can be written as,
\begin{equation}
\label{eq:Wu_inq_Psi} 
\Ent_{\pi_T}[f]\le T\,\EE_{\pi_T}[\Psi(f,\der f)]\quad\textnormal{ for all $L^1(\pi_T)$-integrable functions $f:\mathbb N\to (0,\infty)$}.
\end{equation}
We can now state the entropy representation formula. 
\begin{proposition}
\label{prop:entropy_rep_semi}
Let $f:\N\to (0,\infty)$ be an $L^1(\pi_T)$-integrable function such that $\Ent_{\pi_T}[f]<\infty$ and $\EE_{\pi_T}[\Psi(f,\der f)]<\infty$. Then,
\begin{equation}
\label{eq:entropy_rep_semigroup}
\Ent_{\pi_T}[f]=\int_0^T\EE_{\pi_t}[\Psi(\Pheat_{T-t}f,\der\Pheat_{T-t}f)]\diff t.
\end{equation}
\end{proposition}
Under more restrictive assumptions on $f$, the representation formula \eqref{eq:entropy_rep_semigroup}  appears as a special case of \cite[Equation (1.7)]{MR1800540}, and also follows from the variational formula  in \cite[Remark 3, p. 102]{Klartag_Lehec}; cf. Proposition \ref{prop:entropy_rep}. But for the purpose of characterizing the equality cases of Wu's inequality (for the same class of functions for which the inequality holds), we need \eqref{eq:entropy_rep_semigroup} to hold for all functions $f$ satisfying the assumptions of Proposition \ref{prop:entropy_rep_semi}. In order to avoid dealing with the technical question regarding the most general conditions under which the Poisson-F\"ollmer process exist, we will use semigroup tools.

\begin{proof}[Proof of Proposition \ref{prop:entropy_rep_semi}] 
First note that since $\Psi$ is convex \cite[\S 1.4]{MR1800540}, Jensen's inequality yields 
\[
\Psi(\Pheat_{T-t}f,\der\Pheat_{T-t}f)\le \Pheat_{T-t}\Psi(f,\der f),
\]
and hence the integrand on the right-hand side of \eqref{eq:entropy_rep_semigroup} is always finite,
\begin{equation}
\label{eq:Psi_jensen}
\EE_{\pi_t}[\Psi(\Pheat_{T-t}f,\der\Pheat_{T-t}f)]\le \EE_{\pi_t}[\Pheat_{T-t}\Psi(f,\der f)]=\EE_{\pi_T}[\Psi(f,\der f)]<\infty.
\end{equation}
Fix $k\in \N$ and define $\alpha:[0,T]\to\R$ by
\begin{equation}
\label{eq:alpha_def}
\alpha(t):=\EE_{\pi_t}[\Pheat_{T-t}f\log \Pheat_{T-t}f]-\EE_{\pi_T}[f]\log(\EE_{\pi_T}[f]),
\end{equation}
and note that the convexity of $r\mapsto r\log r$, and Jensen's inequality, yield
\[
\EE_{\pi_t}[\Pheat_{T-t}f\log \Pheat_{T-t}f]\le \EE_{\pi_t}\Pheat_{T-t}[f\log f]=\Ent_{\pi_T}[f]+\EE_{\pi_T}[f]\log(\EE_{\pi_T}[f])<\infty,
\]
so $\alpha(t)$ is finite. Since $\pi_0=\delta_0$ and $\Pheat_Tf(0)=\EE_{\pi_T}[f]$, we have 
\begin{equation}
\label{eq:entropy_alpha}
\Ent_{\pi_T}[f]=\int_0^T \partial_t\alpha(t)\diff t.
\end{equation}
Using \eqref{eq:heat}, \eqref{eq:heat_exp}, and $\partial_t\pi_t(k)=\pi_t(k-1)-\pi_t(k)$, with $\pi_t(-1):=0$, we get
\begin{align}
\begin{split}
\partial_t\alpha(t)&=\sum_{k=0}^{\infty}[\Pheat_{T-t}f(k)-\Pheat_{T-t}f(k+1)]\pi_t(k)\label{eq:alpha_time-der}\\
&\quad +\sum_{k=0}^{\infty}\log \Pheat_{T-t}f(k)[\Pheat_{T-t}f(k)-\Pheat_{T-t}f(k+1)]\pi_t(k)\\
&\quad +\sum_{k=0}^{\infty}\Pheat_{T-t}f(k)\log \Pheat_{T-t}f(k)[\pi_t(k-1)-\pi_t(k)].
\end{split}
\end{align}
The second and third terms on the right-hand side of equation \eqref{eq:alpha_time-der} can be written as, using $\pi_t(-1)=0$,
\begin{align*}
&\sum_{k=0}^{\infty}\log \Pheat_{T-t}f(k)[\Pheat_{T-t}f(k)-\Pheat_{T-t}f(k+1)]\pi_t(k)\\
&\quad +\sum_{k=0}^{\infty}\Pheat_{T-t}f(k)\log \Pheat_{T-t}f(k)[\pi_t(k-1)-\pi_t(k)]\\
&=-\sum_{k=0}^{\infty}\Pheat_{T-t}f(k+1)\log \Pheat_{T-t}f(k)\pi_t(k)+\sum_{k=1}^{\infty}\Pheat_{T-t}f(k)\log \Pheat_{T-t}f(k)\pi_t(k-1)\\
&=-\sum_{k=0}^{\infty}\Pheat_{T-t}f(k+1)\log \Pheat_{T-t}f(k)\pi_t(k)+\sum_{k=0}^{\infty}\Pheat_{T-t}f(k+1)\log \Pheat_{T-t}f(k+1)\pi_t(k),
\end{align*}
so
\begin{align}
\begin{split}
&\partial_t\alpha(t)=\\
&\sum_{k=0}^{\infty}\left\{\Pheat_{T-t}f(k+1)\log \Pheat_{T-t}f(k+1)-\Pheat_{T-t}f(k+1)\log \Pheat_{T-t}f(k)-\der\Pheat_{T-t}f(k)\right\}\pi_t(k)\label{eq:alpha_time-der_update}\\
&=\EE_{\pi_t}[\Psi(\Pheat_{T-t}f,\der\Pheat_{T-t}f)].
\end{split}
\end{align}
\end{proof}

Using \eqref{eq:entropy_rep_semigroup}  we now turn to the characterization of the equality cases of Wu's inequality.
\begin{proposition}[Equality cases of Wu's inequality]
\label{prop:Wu_inq_proof_rep}
Fix $T>0$, and let $\pi_T$ be the Poisson measure on $\N$ with intensity $T$. Let $f:\N\to (0,\infty)$ be an $L^1(\pi_T)$-integrable function such that $\EE_{\pi_T}[\Psi(f,\der f)]<\infty$. Then, equality holds in Wu's inequality
\begin{equation}
\label{eq:Wu_inq_Psi_via_rep} 
\Ent_{\pi_T}[f]\le T\,\EE_{\pi_T}[\Psi(f,\der f)],
\end{equation}
if, and only if, $f(k)=e^{ak+b}$ for some $a,b\in \R$. 
\end{proposition}
\begin{proof}
 By \eqref{eq:entropy_rep_semigroup}, and arguing as in \eqref{eq:Psi_jensen}, we get
\begin{equation}
\label{eq:Wu_inq_Psi_via_rep_proof} 
\Ent_{\pi_T}[f]=\int_0^T\EE_{\pi_t}[\Psi(\Pheat_{T-t}f,\der\Pheat_{T-t}f)]\diff t\le  T\,\EE_{\pi_T}[\Psi(f,\der f)].
\end{equation}
If $f(k)=e^{ak+b}$ for some $a,b\in \R$ then one can verify that equality holds in \eqref{eq:Wu_inq_Psi_via_rep}. For the reverse direction, we note that equality in \eqref{eq:Wu_inq_Psi_via_rep_proof}  implies that $ t\mapsto \EE_{\pi_t}[\Psi(\Pheat_{T-t}f,\der\Pheat_{T-t}f)]$ is a constant, and hence 
\begin{equation}
\label{eq:Jensen_eq}
 \EE_{\pi_T}[\Psi(f,\der f)]=\EE_{\pi_0}[\Psi(\Pheat_{T}f,\der\Pheat_{T}f)]=\Psi(\Pheat_{T}f,\der\Pheat_{T}f)(0)=\Psi( \EE_{\pi_T}[(f,\der f)]).
\end{equation}
Using the relation
\begin{equation}
\label{eq:Psi-phi}
\Psi(u,v)=u\phi\left(\frac{u+v}{u}\right),\qquad\textnormal{for all }u>0\textnormal{ and }u+v>0,
\end{equation}
with
\[
\phi(r):=r\log r-r+1,
\]
we have
\begin{align*}
 \EE_{\pi_T}[\Psi(f,\der f)]=\sum_{k=0}^{\infty}\phi\left(\frac{f(k+1)}{f(k)}\right)f(k)\pi_T(k).
\end{align*}
On the other hand,
\begin{align*}
\Psi\left( \EE_{\pi_T}[(f,\der f)]\right)&=\EE_{\pi_T}[f]\phi\left(\frac{\EE_{\pi_T}[\der f]+\EE_{\pi_T}[f]}{\EE_{\pi_T}[f]}\right)=\EE_{\pi_T}[f]\phi\left(\frac{\sum_{k=0}^{\infty}f(k+1)\pi_T(k)}{\EE_{\pi_T}[f]}\right)\\
&=\EE_{\pi_T}[f]\phi\left(\sum_{k=0}^{\infty}\frac{f(k+1)}{f(k)}\frac{f(k)\pi_T(k)}{\EE_{\pi_T}[f]}\right),
\end{align*}
so \eqref{eq:Jensen_eq} reads
\begin{equation}
\label{eq:eq_in_jensen}
\sum_{k=0}^{\infty}\phi\left(\frac{f(k+1)}{f(k)}\right)\frac{f(k)\pi_T(k)}{\EE_{\pi_T}[f]}=\phi\left(\sum_{k=0}^{\infty}\frac{f(k+1)}{f(k)}\frac{f(k)\pi_T(k)}{\EE_{\pi_T}[f]}\right).
\end{equation}
Since $\phi$ is strictly convex on $(0,\infty)$, and as $\mathbb N\ni k\mapsto \frac{f(k)\pi_T(k)}{\EE_{\pi_T}[f]}$ is a probability measure, the equality cases of Jensen's inequality, together with \eqref{eq:eq_in_jensen}, imply that there exists a constant $c$ such that 
\begin{equation}
\label{eq:f_constnat}
f(k+1)=cf(k)\quad\forall~k\in \N,
\end{equation}
which shows that $f(k)=e^{ak+b}$ for some $a,b\in \R$. 
\end{proof}

\section{Comparison with the Gaussian setting}
\label{sec:gauss}
The analogue of Wu's inequality in the Gaussian setting is the \emph{logarithmic Sobolev inequality}. Let $\gamma$ be the standard Gaussian measure on $\R^n$. Then, the logarithmic Sobolev inequality states that, for any $\mu=f\gamma$ (for which the quantities below are well-defined),
\begin{equation}
\label{eq:LSI}
\HH(\mu|\gamma)\le\frac{1}{2} \int_{\R^n} |\nabla\log f|^2 \diff \mu.
\end{equation}
Let us now present Lehec's proof  \cite{lehec2013representation} of \eqref{eq:LSI} using the Gaussian analogue of the entropy representation formula \eqref{eq:entropy_rep_lambda}. In the continuous setting (taking $T=1$ for simplicity), the Poisson-F\"{o}llmer process is replaced by the \emph{F\"ollmer process} \cite{follmer2005entropy, follmer2006time, lehec2013representation}, which is the solution of the following stochastic differential equation,
\begin{equation}
\label{eq:Follmer}
\diff Y_t=v_t\diff t+\diff B_t, \quad Y_0=0,
\end{equation}
where $(B_t)_{t\ge 0}$ is a standard Brownian motion in $\R^n$, and
\begin{equation}
\label{eq:v_def}
v_t:=\nabla\log \Hheat_{1-t}f(Y_t),
\end{equation}
with $(\Hheat_t)$ the heat semigroup,
\begin{equation}
\label{eq:heat_semigroup}
\Hheat_t f(x):=\int_{\R^n} f(x+\sqrt{t}z)\diff \gamma(z). 
\end{equation}
The process $(Y_t)_{t\in [0,1]}$ satisfies $Y_1\sim \mu=f\pi$, and we have the entropy representation formula
\begin{equation}
\label{eq:entropy_rep_Gauss}
\HH(\mu|\gamma)=\frac{1}{2}\int_0^1 \EE[\varphi(v_t)] \diff t,
\end{equation}
where
\begin{equation}
\label{eq:rate_fn_gauss}
\varphi(x):=\frac{x^2}{2}. 
\end{equation}
The representation \eqref{eq:entropy_rep_Gauss} is the analogue of \eqref{eq:entropy_rep_lambda}. In Table \ref{tab:comparison} we summarize the comparisons of the  stochastic constructions in the Poisson and Gaussian settings.

\begin{table}[h]
\centering
\begin{tabular}{|l|c|c|}
\hline
\multicolumn{1}{|c|}{} & \textbf{Poisson} & \textbf{Gaussian} \\
\hline
\textbf{Process} & $X_t$ \eqref{eq:Xt_def} & $Y_t$ \eqref{eq:Follmer}  \\
\textbf{Control} & $\lambda_t$ \eqref{eq:lambda_def} & $v_t$ \eqref{eq:v_def} \\
\textbf{Semigroup} & $\Pheat_t$ \eqref{eq:Poisson_semigroup} & $\Hheat_t$ \eqref{eq:heat_semigroup} \\
\textbf{Rate function} & $\phi$ \eqref{eq:rate_fn_poisson}  & $\varphi$ \eqref{eq:rate_fn_gauss} \\
\textbf{Entropy representation formula} & \eqref{eq:entropy_rep_lambda} & \eqref{eq:entropy_rep_Gauss}\\
\hline
\end{tabular}
\caption{Comparison between Poisson and  Gaussian}
\label{tab:comparison}
\end{table}
Let us now present Lehec's proof of \eqref{eq:LSI} using \eqref{eq:entropy_rep_Gauss}. It can be shown that the process $(v_t)$ is a martingale, so since $\varphi$ is convex, it follows from Jensen's inequality that
\begin{equation}
\label{eq:entropy_rep_Gauss_inq}
\HH(\mu|\gamma)=\frac{1}{2}\int_0^1 \EE[\varphi(v_t)] \diff t\le \frac{1}{2} \EE[\varphi(v_1)]=\frac{1}{2} \int_{\R^n} |\nabla\log f|^2 \diff \mu.
\end{equation}
Thus, we see that our proof of Wu's inequality is the exact discrete analogue of Lehec's proof of the Gaussian logarithmic Sobolev inequality. Turning to the question of improvement of the inequality, Eldan, Lehec, and Shenfeld \cite{eldan2020stability} used \eqref{eq:entropy_rep_Gauss} to get stability estimates for the  Gaussian logarithmic Sobolev inequality. Denote the deficit in the Gaussian logarithmic Sobolev inequality as
\[
\delta(f):=\frac{1}{2} \int_{\R^n} |\nabla\log f|^2 \diff \mu-\HH(\mu|\gamma).
\]
Then, the analogue of \eqref{eq:delta_phi} is the identity
\cite[Proposition 10]{eldan2020stability},
\begin{equation}
\label{eq:deficit_identity_LSI}
\delta(f):=\frac{1}{2}\int_0^1\EE[|v_1-v_t|^2]\diff t.
\end{equation}
However, from this point on the analogies begin to break. In \cite{eldan2020stability} it is shown that 
\begin{equation}
\label{eq:vt_der}
\EE[\varphi(v_t)]\ge \int_0^t\EE[\varphi(v_s)]^2\diff s,
\end{equation}
so differentiating \eqref{eq:vt_der} (more precisely a matrix version of this inequality) yields a differential inequality for $t\mapsto \EE[\varphi(v_t)]$, which is a key point in some of the stability estimates of \cite{eldan2020stability}. On a high-level we can view \eqref{eq:vt_der} as the analogue of Lemma \ref{lem:phi_der}. However, while in \eqref{eq:vt_der} the expression $\EE[\varphi(v_t)]$ appears on both sides of the inequality, in our setting we must deal with discrete derivatives which hinders such differential inequalities. Instead, we make crucial use of the observation in Lemma \ref{lem:phi_Taylor} that we can express the right-hand side in Lemma \ref{lem:phi_der} in terms of relative entropy, which then leads to our improvement \eqref{eq:stab_ultra}.

\bibliographystyle{amsplain0}
\bibliography{ref_Poisson_transport}

\end{document}